\documentclass[11pt,reqno]{amsart}
\usepackage[margin=1in]{geometry}
\usepackage{adjustbox}
\usepackage{amsmath,amsfonts,amssymb,amsthm}
\usepackage{mathtools}
\usepackage{commath}

\usepackage{graphicx} 
\graphicspath{ {images/} }
\usepackage[rightcaption]{sidecap}
\usepackage{caption}
\usepackage{subcaption}
\usepackage{epsfig}

\usepackage{siunitx}
\sisetup{output-exponent-marker=\ensuremath{\mathrm{e}}}

\usepackage{algorithm}
\usepackage{algpseudocode}

\usepackage[utf8]{inputenc}
\usepackage{dirtytalk}
\usepackage[english]{babel}

\usepackage{physics}
\newtheorem{theorem}{Theorem}[section]
\newtheorem{lemma}[theorem]{Lemma}

\newtheorem{corollary}[theorem]{Corollary}

\newenvironment{definition}[1][Definition]{\begin{trivlist}
\item[\hskip \labelsep {\bfseries #1}]}{\end{trivlist}}
\newenvironment{example}[1][Example]{\begin{trivlist}
\item[\hskip \labelsep {\bfseries #1}]}{\end{trivlist}}

\usepackage{bbm}
\newcommand{\defeq}{\vcentcolon=}

\newcommand{\diag}{\mathop{\mathrm{diag}}}
\newcommand{\card}{\mathop{\mathrm{card}}}

\newcommand{\poly}{\mathop{\mathrm{poly}}}

\usepackage{hyperref,xcolor}
\hypersetup{
     colorlinks,
     linkcolor={red!50!black},
     citecolor={blue!50!black},
     urlcolor={blue!80!black}
}

\begin{document}
\title{Rank-Constrained Hyperbolic Programming}
\author{Zhen Dai}
\author{Lek-Heng Lim}
\maketitle

\begin{abstract}
We extend rank-constrained optimization to general hyperbolic programs (HP) using the notion of matroid rank. For LP and SDP respectively, this reduces to sparsity-constrained LP and rank-constrained SDP that are 
already well-studied. But for QCQP and SOCP, we obtain new interesting optimization problems. For example, rank-constrained SOCP includes weighted Max-Cut and nonconvex QP as special cases, and dropping the rank constraints yield 
the standard SOCP-relaxations of these problems. We will show (i) how to do rank reduction for SOCP and QCQP, (ii) that rank-constrained SOCP and rank-constrained QCQP are NP-hard, and (iii) an improved result for rank-constrained SDP showing that if the number of constraints is $m$ and the rank constraint is less than $2^{1/2-\epsilon} \sqrt{m}$ for some $\epsilon>0$, then the problem is NP-hard.
We will also study sparsity-constrained HP and extend results on LP sparsification to SOCP and QCQP. In particular, we show that there always exist (a) a solution to SOCP of cardinality at most twice the number of constraints and (b) a solution to QCQP of cardinality at most the sum of the number of linear constraints and the sum of the rank of the matrices in the quadratic constraints; and both (a) and (b) can be found efficiently.
\end{abstract}

\section{Introduction}
In this paper, we study rank-constrained and sparsity-constrained hyperbolic programming (HP). Specifically, we consider four types of HP: linear programming (LP), quadratically constrained quadratic program (QCQP), second order cone programming (SOCP), and semidefinite programming (SDP).

Rank-constrained SDP occurs frequently in combinatorial optimization \cite{Anjos2002,Lim2017,Luo2010}. It is well-known that Max-Cut could be viewed as a rank-constrained SDP and dropping this rank constraint yields the standard SDP-relaxation of Max-Cut \cite{Anjos2002}. Thus, it is natural to consider when we can get a solution to SDP of small rank. In \cite{Barvinok1995, LemonSoYe2015, Pataki1998}, it is shown that every feasible SDP with $m$ linear constraints always has a solution of rank at most $(\sqrt{1 + 8m} - 1)/2$. Furthermore, this low-rank solution can be found in polynomial time by first solving the SDP and then run a rank reduction algorithm proposed in \cite{Barvinok1995, LemonSoYe2015, Pataki1998}. Specifically, a rank reduction algorithm takes a solution to SDP as input and outputs another low-rank solution to this SDP. This result implies that rank-constrained SDP is polynomial time solvable for any rank constraint that is at least $(\sqrt{1 + 8m} - 1)/2$.

Parallel to SDP rank reduction, LP sparsification is studied in \cite{Cara1907, LemonSoYe2015, Ye2008}. For any feasible LP with $m$ linear constraints, there always exists a solution of cardinality at most $m$. Moreover, this low-cardinality solution can be found in polynomial time by fist solving the LP and then run a LP sparsification algorithm \cite{Cara1907, LemonSoYe2015, Ye2008}. Specifically, a LP sparsification algorithm takes a solution to LP as input and outputs another sparse solution.

For rank-constrained problems, we use rank in HP \cite{Brand2011, Renegar2006} to define rank in LP, QCQP, SOCP, and SDP, by viewing them as special cases of HP. For each of these problems, we study the corresponding rank-constrained problem in two ways. First, we give a polynomial time rank-reduction algorithm to show that it is always possible to get a solution of "small" rank provided that the problem is feasible. Second, we consider the complexity of these rank-constrained problems. Under certain conditions, we will show that rank-constrained LP is polynomial time solvable and rank-constrained QCQP and rank-constrained SOCP are both NP-hard. For rank-constrained SDP with $m$ linear constraints, we consider rank constraint $r(m)$, that is a function of $m$. Then, we show that the complexity of rank-constrained SDP changes as $r(m)$ passes through $\sqrt{2m}$. In particular, rank-constrained SDP is NP-hard when $r(m) \ll \sqrt{2m}$ and polynomial time solvable when $r(m) \gg \sqrt{2m}$.

For sparsity-constrained problems, we extend the results in LP sparsification \cite{Cara1907, LemonSoYe2015, Ye2008} to QCQP and SOCP. Previous results show that every feasible LP with $m$ linear constraints has a solution of cardinality at most $m$, and we can find such a solution in polynomial time \cite{Cara1907, LemonSoYe2015, Ye2008}. In addition, there are examples of LP with $m$ constraints whose solutions have cardinality at least $m$ \cite{Cara1907, LemonSoYe2015, Ye2008}, which shows that this LP sparsification result cannot be improved without further assumptions. We extend this result to QCQP and SOCP and show that our results cannot be improved without further assumptions.


\subsection{Further related works}
Rank for Lorentz cone has been studied through the lens of Euclidean Jordan algebra \cite{symmetric_cone_analysis,Jordan_rank,SDP_handbook}. The definition of rank for points in a Lorentz cone in \cite{symmetric_cone_analysis,Jordan_rank} is the same as the rank for points in SOCP in our work when there is only one second order cone constraint. In this case, the rank estimation theorem in \cite{Jordan_rank} gives the same result as the SOCP rank reduction result in our work.

In addition, our results on SOCP rank reduction can also be deduced from \cite{SDPgeometry,SDP_handbook}. Specifically, the author gives an algorithm which constructs an extreme point solution from any starting solution \cite{SDPgeometry,SDP_handbook} for conic LP problems. Applying this algorithm to SOCP gives a solution of small rank.



\section{Rank-Constrained SDP}
In this section, we study Semidefinite Programming (SDP) with rank constraint:
\begin{equation} \label{eq:sdp_rank}
\begin{split}
\underset{X \in \mathbb{S}^n}{\textrm{minimize}} 
\hspace{2mm} & \tr(AX) \\
\textrm{subject to}
\hspace{2mm} & \tr(A_i X) = b_i, \; i = 1, \dots, m; \\
&  X \geq 0; \\
& \rank(X) \leq r(m),
\end{split}
\end{equation}
where $A,A_1,\dots,A_m \in \mathbb{S}^n$, $b_1, \dots, b_m \in \mathbb{R}$, and $r(m)$ is a function in $m$. In this section, we begin with some examples of rank-constrained SDP. Then, we study the condition under which SDP is NP-hard. We will show that there is a phase transition in the complexity of rank-constrained SDP when $r(m)$ passes through $\sqrt{2m}$.

\subsection{Examples of rank-constrained SDP}
Rank-constrained SDP appears in many combinatorial problems such as weighted Max-Cut, clique number, and stability number. In the following, we formulate these combinatorial problems in terms of rank-constrained SDP.



\begin{example}
Consider a graph $G = (V,E)$ with vertex set $V$ and edge set $E$. Let $w: E \longrightarrow \mathbb{R}$ be a weight function on $G$. Without loss of generality, we might assume that $G$ is a complete graph (i.e. $(i,j) \in E$ for all $i,j \in V$) and some of the edges have zero weight (i.e. $w(e) = 0$ for some $e \in E$). In weighted Max-Cut problem, the goal is to find a partition of $V = V_1 \sqcup V_2$, that maximizes the sum of the weights on edges whose endpoints lie in different portions of the partition. To be specific, we want to solve the following problem:
\begin{equation*}
    \begin{split}
        \underset{V = V_1 \sqcup V_2}{\textrm{maximize}}
        \hspace{2mm} & \sum_{i \in V_1, j \in V_2} w(i,j).
    \end{split}
\end{equation*}
When $w(i,j) \in \{0,1\}$ for all $i,j \in V$, we call this problem the unweighted Max-Cut problem.

For simplicity, we identify $V$ with $[n] \defeq \{1,2,\dots,n\}$. We define the weight matrix $W \in \mathbb{R}^{n \times n}$ by
\begin{equation*}
    W[i,j] = \begin{cases}
         w(i,j)/4 & \textrm{if} \quad i \neq j \\
         -\sum_{k \neq i}w(i,k)/4 & \textrm{if} \quad i = j.
    \end{cases}
\end{equation*}
Then weighted Max-Cut is equivalent to the following rank constrained SDP \cite{Anjos2002}:

\begin{equation} \label{eq:1}
\begin{split}
\underset{X \in \mathbb{S}^n}{\textrm{minimize}} 
\hspace{2mm} & \tr(WX) \\
\textrm{subject to}
\hspace{2mm} & X_{ii} = 1, \; i = 1, \dots, n; \\
&  X \geq 0; \\
& \rank(X) \leq 1.
\end{split}
\end{equation}
\end{example}


\begin{example}
Next, we consider the problem of computing clique number. Given a graph $G$, a clique in $G$ is a subgraph $H$ of $G$ such that any two distinct vertices in $H$ are adjacent (i.e. there is an edge between them in $G$). The clique number $\omega(G)$ of $G$ is the maximum number of vertices in a clique in $G$.

By results in \cite{Lim2017},
\[
1 - \frac{1}{\omega(G)} = 2\underset{x \in \Delta^n}{\max} \sum_{(i,j) \in E} x_ix_j,
\]
where $\Delta^n = \{x \in \mathbb{R}^n: x_1 + \dots + x_n = 1, x_i \geq 0\}$ is the unit simplex in $\mathbb{R}^n$. Thus, to compute the clique number, it suffices to solve the follow QP:
\[
\begin{split}
\underset{x \in \mathbb{R}^n}{\textrm{minimize}} 
\hspace{2mm} & \sum_{(i,j) \in E} -x_ix_j \\
\textrm{subject to}
\hspace{2mm} & x_i \geq 0, \; i = 1, \dots, n; \\
&  \sum_{i = 1}^n x_i = 1.
\end{split}
\]
By results in \cite{Luo2010}, we can convert this QP into a rank constrained SDP. We first homogenize it to obtain the following QP:
\[
\begin{split}
\underset{x \in \mathbb{R}^n, t \in \mathbb{R}}{\textrm{minimize}} 
\hspace{2mm} & \sum_{(i,j) \in E} -x_ix_j \\
\textrm{subject to}
\hspace{2mm} & tx_i \geq 0, \; i = 1, \dots, n; \\
&  \sum_{i = 1}^n tx_i = 1; \\
&   t^2 = 1.
\end{split}
\]
This is clearly equivalent to the original QP by substituting $tx$ for $x$. Let $A \in \mathbb{R}^{(n+1) \times (n+1)}$ be such that
\[
    A[i,j] = \begin{cases} -1 \hspace{2mm} &\textrm{if}  \hspace{2mm} i\leq n, j \leq n, (i,j) \in E, \\
    0  \hspace{2mm} &\textrm{otherwise}.
\end{cases}
\]
Then the homogenized QP is equivalent to the following rank constrained SDP \cite{Luo2010}:
\[
\begin{split}
\underset{X \in S^{n+1}}{\textrm{minimize}} 
\hspace{2mm} & \tr(AX) \\
\textrm{subject to}
\hspace{2mm} & X_{i(n+1)} \geq 0, \; i = 1, \dots, n; \\
& \sum_{i = 1}^n X_{i(n+1)} = 1; \\
& X_{(n+1)(n+1)} = 1; \\
&  X \geq 0; \\
& \rank(X) \leq 1.
\end{split}
\]
For any solution $X$, $\rank(X) \neq 0$ since $X_{(n+1)(n+1)} = 1$. Thus, $X = vv^T$ for some $v \in \mathbb{R}^{n+1}$. Then $(x,t) = v$ is a solution to the homogenized QP.
\end{example}

\begin{example}
Next, we consider the problem of computing stability number. Given a graph $G$, the stability number $\alpha(G)$ of $G$ is the maximum number of vertices in $G$, of which no two are adjacent (i.e. there is no edge between them in $G$).

By results in \cite{Lim2017},
\[
1 - \frac{1}{\alpha(G)} = 2\underset{x \in \Delta^n}{\max} \sum_{(i,j) \notin E} x_ix_j,
\]
where $\Delta^n = \{x \in \mathbb{R}^n: x_1 + \dots + x_n = 1, x_i \geq 0\}$ is the unit simplex in $\mathbb{R}^n$. Thus, to compute the clique number, it suffices to solve the follow QP:
\[
\begin{split}
\underset{x \in \mathbb{R}^n}{\textrm{minimize}} 
\hspace{2mm} & \sum_{(i,j) \notin E} -x_ix_j \\
\textrm{subject to}
\hspace{2mm} & x_i \geq 0, \; i = 1, \dots, n; \\
&  \sum_{i = 1}^n x_i = 1.
\end{split}
\]
Let $B \in \mathbb{R}^{(n+1) \times (n+1)}$ be such that
\[
    B[i,j] = \begin{cases} -1 \hspace{2mm} &\textrm{if}  \hspace{2mm} i\leq n, j \leq n, (i,j) \notin E, \\
    0  \hspace{2mm} &\textrm{otherwise}.
\end{cases}
\]
By the same argument as we used in the clique number example, this QP is equivalent to the following rank constrained SDP \cite{Luo2010}:
\[
\begin{split}
\underset{X \in S^{n+1}}{\textrm{minimize}} 
\hspace{2mm} & \tr(BX) \\
\textrm{subject to}
\hspace{2mm} & X_{i(n+1)} \geq 0, \; i = 1, \dots, n; \\
& \sum_{i = 1}^n X_{i(n+1)} = 1; \\
& X_{(n+1)(n+1)} = 1; \\
&  X \geq 0; \\
& \rank(X) \leq 1.
\end{split}
\]
\end{example}

\subsection{Complexity of rank-constrained SDP}
In this section, we give the condition under which rank constrained SDP is NP-hard. Recall that rank-constrained SDP is formulated as:
\begin{equation*}
\begin{split}
\underset{X \in \mathbb{S}^n}{\textrm{minimize}} 
\hspace{2mm} & \tr(AX) \\
\textrm{subject to}
\hspace{2mm} & \tr(A_i X) = b_i, \; i = 1, \dots, m; \\
&  X \geq 0; \\
& \rank(X) \leq r(m),
\end{split}
\end{equation*}
where $A,A_1,\dots,A_m \in \mathbb{S}^n$, $b_1, \dots, b_m \in \mathbb{R}$, and $r(m)$ is a function in $m$. In last section, we see that weighted Max-Cut can be formulated as a rank-constrained SDP with $r(m) = 1$. As a result, rank-constrained SDP is NP-hard if $r(m) = 1$ for all $m$. On the other hand, for any feasible SDP, we could first solve the vanilla SDP (i.e. without rank constraint) and then run a rank reduction algorithm to find another optimal solution of rank at most $(\sqrt{1 + 8m} - 1)/2$ \cite{Barvinok1995, LemonSoYe2015, Pataki1998}. Thus, if $r(m) \geq (\sqrt{1 + 8m} - 1)/2$ for all $m$, then we can always solve the rank-constrained SDP by the procedure we just described. Assuming that we can compute real numbers exactly, solving the vanilla SDP (without rank constraint) and running the rank reduction algorithm can both be done in polynomial time \cite{Barvinok1995, LemonSoYe2015, Pataki1998}. Roughly speaking, when $r(m) \gg \sqrt{2m}$, rank-constrained SDP is polynomial time solvable. In the following result, we show that when $r(m) \ll \sqrt{2m}$, rank-constrained SDP is NP-hard.

\begin{theorem} \label{thm:00}
Let $A,A_1,\dots,A_m \in \mathbb{S}^n$, $b_1, \dots, b_m \in \mathbb{R}$, and $r:\mathbb{Z}^+ \longrightarrow \mathbb{Z}^+$ be given. Suppose that there exist constants $M, \epsilon>0$, such that
\begin{equation} \label{eq:rank}
    r(m) < 2^{1/2 - \epsilon} \sqrt{m}, \quad \textrm{for all} \quad m \geq M.
\end{equation}
Then, the rank-constrained SDP
\begin{equation*}
\begin{split}
\underset{X \in \mathbb{S}^n}{\operatorname{minimize}} 
\hspace{2mm} & \tr(AX) \\
\operatorname{subject \hspace{1mm} to}
\hspace{2mm} & \tr(A_i X) = b_i, \; i = 1, \dots, m; \\
&  X \geq 0; \\
& \rank(X) \leq r(m),
\end{split}
\end{equation*}
is NP-hard.
\end{theorem}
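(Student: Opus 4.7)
The plan is to prove NP-hardness by a polynomial-time reduction from weighted Max-Cut to the rank-constrained SDP in the statement. Weighted Max-Cut on $n_0$ vertices admits the rank-$1$ SDP formulation \eqref{eq:1}, with exactly $n_0$ linear constraints $X_{ii}=1$. So it suffices to show that any size-$n_0$ rank-$1$ SDP can be embedded into a size-$(n_0+s)$ rank-$(s+1)$ SDP with $m$ linear constraints, for some $s$ with $s+1 \le r(m)$ and some $m = \poly(n_0)$.

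The embedding is by block-diagonal identity padding. Given $W \in \mathbb{S}^{n_0}$, set $\tilde W = W \oplus 0 \in \mathbb{S}^{n_0+s}$ and look for $\tilde X = \begin{pmatrix} X & Y \\ Y^{T} & Z \end{pmatrix}$ minimizing $\tr(\tilde W \tilde X) = \tr(WX)$. On top of the $n_0$ diagonal constraints $X_{ii}=1$ inherited from Max-Cut, I would enforce $Z = I_s$ (that is, $s + \binom{s}{2}$ linear constraints) and $Y = 0$ (that is, $n_0 s$ linear constraints). Any feasible $\tilde X$ is then forced into block-diagonal form $X \oplus I_s$ with $X \ge 0$, so $\rank(\tilde X) \le s+1$ if and only if $\rank(X) \le 1$, and the padded SDP is equivalent to the Max-Cut SDP.

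To calibrate the parameters I would set $s+1 = r(m)$ and $m = \lceil C n_0^2\rceil$ for a constant $C = C(\epsilon)$. The number of non-trivial linear constraints of the padded SDP is $n_0 r(m) + r(m)(r(m)-1)/2$, and the hypothesis $r(m) < 2^{1/2-\epsilon}\sqrt m$ bounds this by $2^{1/2-\epsilon} n_0\sqrt m + 2^{-2\epsilon} m$, which is at most $m$ once $C$ is large enough depending only on $\epsilon$. Any deficit is filled by trivial constraints $\tr(0\cdot \tilde X) = 0$ to reach exactly $m$ linear constraints. Taking $n_0$ larger than a constant (depending on $M$ and $\epsilon$) ensures $m \ge M$, so the hypothesis applies, and the whole reduction runs in time polynomial in $n_0$.

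The main obstacle is this arithmetic calibration: the hypothesis upper bounds $r = r(m)$ by $2^{1/2-\epsilon}\sqrt m$, while the padded constraint count $n_0 r + r^2/2$ must fit inside $m$. The $r^2/2$ contribution from enforcing $Z = I_s$ is at most $2^{-2\epsilon} m$, leaving slack of order $m$ for the dominant $n_0 r$ term, which is exactly what makes $m = \Theta(n_0^2)$ suffice. If the hypothesis were weakened to $\epsilon = 0$, the $r^2/2$ term alone could use up all of $m$, and this reduction would collapse, consistent with the polynomial-time solvability of rank-$\le (\sqrt{1+8m}-1)/2$ SDPs recalled at the start of this subsection.
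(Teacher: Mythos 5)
Your reduction is correct, and its core gadget is the same as the paper's: embed the rank-$1$ Max-Cut SDP \eqref{eq:1} into a larger instance by padding with an identity block of size $r(m)-1$, forcing the off-diagonal coupling block to zero, so that feasible points are $X \oplus I_{r(m)-1}$ and the rank bound $r(m)$ collapses to $\rank(X)\le 1$; your constraint count $n_0 r + r(r-1)/2$ agrees with the paper's $(n-1)r(m)+r(m)(r(m)+1)/2$, and both proofs pad with superfluous constraints to reach exactly $m$. Where you genuinely diverge is the calibration of $m$, which is where most of the paper's effort goes: the paper insists on finding $m$ with $\phi(m)=n$ for $\phi(m)=\lfloor(m-r(m)(r(m)+1)/2)/r(m)\rfloor+1$, proves a discrete intermediate-value statement ($\phi(m+1)-\phi(m)\le 1$, $\phi(n-1)\le n$, $\phi(\lceil n/\delta\rceil^2)\ge n$) and searches the interval, and since that argument needs $r$ non-decreasing it must then redo everything with $\tilde r(m)=\max_{i\le m}r(i)$ to handle general $r$. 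You instead fix $m=\lceil Cn_0^2\rceil$ outright and only verify the one-sided inequality $n_0 r(m)+r(m)(r(m)-1)/2\le 2^{1/2-\epsilon}n_0\sqrt m+2^{-2\epsilon}m\le m$ for $C=C(\epsilon)$ large; since you never need to hit a target value of $\phi$, no monotonicity of $r$ is required and the special-case/general-case split disappears, while the dimension $n_0+r(m)-1=O(n_0)$ and $m=O(n_0^2)$ keep the reduction polynomial exactly as in the paper. The only loose end is the same one the paper handles explicitly: instances with $n_0$ below the constant threshold $\sqrt{M/C}$ (where the hypothesis on $r$ need not apply) should be dispatched by brute force in constant time so that the reduction covers all Max-Cut inputs; you gesture at this but should state it. With that one sentence added, your argument is a complete and in fact noticeably simpler proof of Theorem \ref{thm:00}.
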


The above result, together with results of SDP rank reduction \cite{Barvinok1995, LemonSoYe2015, Pataki1998}, show that there is a phase transition in the complexity of rank-constrained SDP as $r(m)$ passes through $\sqrt{2m}$.

Before giving the formal proof, we explain the main ideas of this proof. To begin with, consider the case $r(m) = 2$. Recall that weighted Max-Cut is equivalent to the following rank-constrained SDP:
\begin{equation} \label{eq:maxcut}
\begin{split}
\underset{X \in \mathbb{S}^n}{\textrm{minimize}} 
\hspace{2mm} & \tr(WX) \\
\textrm{subject to}
\hspace{2mm} & X_{ii} = 1, \; i = 1, \dots, n; \\
&  X \geq 0; \\
& \rank(X) \leq 1.
\end{split}
\end{equation}
Now, we transform the above rank-constrained SDP into a new rank-constrained SDP, whose rank constraint is two.
Let 
\begin{equation*}
    W' = 
\begin{bsmallmatrix}
  0 & 0\\ 
  0 & W
\end{bsmallmatrix} \in \mathbb{R}^{(n+1) \times (n+1)},
\end{equation*}
where $0$ denotes vectors whose entries are zeros.
Then, consider the following rank-constrained SDP:
\begin{equation} \label{eq:maxcut2}
\begin{split}
\underset{X' \in S^{n+1}}{\textrm{minimize}} \hspace{2mm} & \tr(W'X') \\
\textrm{subject to}
\hspace{2mm }& X'_{ii} = 1, \; i = 1, \dots, n+1; \\
& X'_{1j} = 0, \; j = 2, \dots, n+1; \\
& X' \geq 0; \\
& \rank(X') \leq 2.
\end{split}
\end{equation}
Note that any solution $X'$ to the above rank-constrained SDP must have the form
\begin{equation*}
    X' = 
\begin{bsmallmatrix}
  1 & 0\\ 
  0 & X
\end{bsmallmatrix} \in \mathbb{R}^{(n+1) \times (n+1)},
\end{equation*}
where $0$ denotes vectors whose entries are zeros.
The rank-constrained SDP in \eqref{eq:maxcut2} is equivalent to the one in \eqref{eq:maxcut} since $\rank(X') = \rank(X) + 1$.

By applying this ``rank increment'' technique, we can show that rank-constrained SDP with constant rank constraint (i.e. $r(m) = r$ for some constant $r$) is NP-hard.
To get the NP-hardness result for $r$, we need $m = rn + r(r-1)/2 = r(n-1) + r(r+1)/2$ many linear constraints. In other words,
\begin{equation*}
n = \Biggl\lfloor \Biggl(m - \frac{r(r+1)}{2} \Biggr) / r \Biggr\rfloor + 1.
\end{equation*}
Thus, we need $m > r(r+1)/2$. Roughly speaking, $r \ll \sqrt{2m}$. In addition, note that as long as $r \ll \sqrt{2m}$, $n = \Omega(\sqrt{m}) = \Omega(r)$. Thus, the new dimension of the problem $n+r-1$ is polynomial in the original dimension $n$. Thus, any polynomial time algorithm on this transformed problem translates to a polynomial time algorithm on the original problem.

\begin{proof}
We begin with the special case that $r(m)$ is non-deceasing (i.e. $r(m+1) \geq r(m)$ for all $m$). Then, we will drop this additional assumption.

\pmb{Special Case:} \par
Our goal is to reduce weighted Max-Cut to rank-constrained SDP. Suppose that there is a polynomial time algorithm $\mathcal{A}$ for rank-constrained SDP with $r(m)$ satisfying equation \eqref{eq:rank}. Then, we will show that we can use this algorithm to solve weighted Max-Cut in polynomial time. Let
\begin{equation*}
    \phi(m) = \Biggl\lfloor \bigg(m - \frac{r(m)(r(m)+1)}{2} \bigg) \bigg/ r(m) \Biggr\rfloor + 1.
\end{equation*}
Given an input graph $G$ with $n$ nodes and a weight function $w$, we consider two cases $(i) n \geq C$ and $(ii) n < C$ separately, where $C$ is some constant which only depends on $\epsilon$ and $M$. We will pick $C$ later in the proof but it can be determined before we receive the input of weighted Max-Cut. 

If $n \geq C$, our algorithm works as follows:
\begin{enumerate}
    \item find $m$ such that $n = \phi(m)$;
    \item construct a rank-constrained SDP with $m$ linear constrains that is equivalent to weighted Max-Cut on the weighted graph $(G,w)$;
    \item solve this rank-constrained SDP using algorithm $\mathcal{A}$.
\end{enumerate}
If $n<C$, we use brute force to solve the problem. Now, we discuss each step in detail.

\pmb{$(1)$}
We begin with some observations on $\phi(m)$. For $m \geq M$,
\begin{equation} \label{pfrank:01}
\begin{split}
    \phi(m) & \geq \bigg(m - \frac{r(m)(r(m)+1)}{2} \bigg) \bigg/ r(m) \\
    & \stackrel{(a)} \geq \bigg(m - 2^{-2\epsilon}m - 2^{-1/2-\epsilon} \sqrt{m} \bigg) \bigg/ r(m) \\
    & \stackrel{(b)} \geq \delta \sqrt{m} \quad \textrm{for some} \hspace{1mm} \delta>0, \hspace{1mm} \textrm{for all} \hspace{1mm} m \geq M', \hspace{1mm} \textrm{for some} \hspace{1mm} M'>0,
\end{split}
\end{equation}
where $\delta$ and $M'$ only depend on $\epsilon$, and we used equation \eqref{eq:rank} in $(a)$ and $(b)$. Now, we pick $C = max(M,M')+1$. Recall that we only consider input graph $G$ with $n \geq C$ nodes.

Since $r(m)$ is non-decreasing, for all $m \geq C-1$,
\begin{equation} \label{pfrank:02}
\begin{split}
    \phi(m+1) - \phi(m) & = \Biggl\lfloor \bigg(m+1 - \frac{r(m+1)(r(m+1)+1)}{2} \bigg) \bigg/ r(m+1)\Biggr\rfloor - \Biggl\lfloor \bigg(m - \frac{r(m)(r(m)+1)}{2} \bigg) \bigg/ r(m)\Biggr\rfloor \\
    & \leq \Biggl\lfloor \bigg(m+1 - \frac{r(m)(r(m)+1)}{2} \bigg) \bigg/ r(m)\Biggr\rfloor - \Biggl\lfloor \bigg(m - \frac{r(m)(r(m)+1)}{2} \bigg) \bigg/ r(m)\Biggr\rfloor \\
    & \leq 1.
\end{split}
\end{equation}

Note that
\begin{equation} \label{pfrank:03}
    \phi(n-1) \leq n-1+1 = n.
\end{equation}
In addition, by equation \eqref{pfrank:01},
\begin{equation} \label{pfrank:04}
    \phi(\lceil n/\delta \rceil^2) \geq n.
\end{equation}
By equation \eqref{pfrank:02}, \eqref{pfrank:03}, and \eqref{pfrank:04}, there exists some $m \in [n-1,\lceil n/\delta \rceil^2]$ such that $\phi(m) = n$. By computing $\phi(m)$ from $m = n-1$ to $m = \lceil n/\delta \rceil^2$, we can find $m$ with $\phi(m) = n$ in $\mathcal{O}(n^2)$ time.

\pmb{$(2)$}
Recall that weighted Max-Cut is equivalent to the following rank constrained SDP \cite{Anjos2002}:
\begin{equation} \label{pfsdp:1}
\begin{split}
\underset{X \in \mathbb{S}^n}{\textrm{minimize}} 
\hspace{2mm} & \tr(WX) \\
\textrm{subject to}
\hspace{2mm} & X_{ii} = 1, \; i = 1, \dots, n; \\
&  X \geq 0; \\
& \rank(X) \leq 1,
\end{split}
\end{equation}
where $W \in \mathbb{R}^{n \times n}$ is the weight matrix induced by the weighted graph $(G,w)$:
\begin{equation*}
    W[i,j] = \begin{cases}
         w(i,j)/4 & \textrm{if} \quad i \neq j \\
         -\sum_{k \neq i}w(i,k)/4 & \textrm{if} \quad i = j.
    \end{cases}
\end{equation*}
Now, we will construct an equivalent rank-constrained SDP of dimension $n+r(m)-1$. Let
\begin{equation*}
    W' = 
    \begin{bmatrix}
    0 & 0 \\
    0 & W
    \end{bmatrix} \in \mathbb{R}^{(n+r(m)-1) \times (n+r(m)-1)},
\end{equation*}
where $0$ denotes a matrix whose entries are zeroes. Then, we claim that the rank-constrained SDP in \eqref{pfsdp:1} is equivalent to the following rank-constrained SDP:
\begin{equation} \label{pfsdp:2}
\begin{split}
\underset{X' \in \mathbb{S}^{n+r(m)-1}}{\textrm{minimize}} 
\hspace{2mm} & \tr(W'X') \\
\textrm{subject to}
\hspace{2mm} & X'_{ii} = 1, \; i = 1, \dots, n+r(m)-1; \\
& X'_{ij} = 0, \; j = i+1, \dots, n+r(m)-1; \; i = 1, \dots, r(m)-1; \\
&  X' \geq 0; \\
& \rank(X') \leq r(m),
\end{split}
\end{equation}
To see this, note that any solution $X'$ to the rank-constrained SDP in \eqref{pfsdp:2} must have the form
\begin{equation*}
    X' = 
    \begin{bmatrix}
    I_{r(m)-1} & 0 \\
    0 & X
    \end{bmatrix} \in \mathbb{R}^{(n+r(m)-1) \times (n+r(m)-1)},
\end{equation*}
where $I_{r(m)-1} \in \mathbb{R}^{(r(m)-1) \times (r(m)-1)}$ is the identity matrix, $X \in \mathbb{S}^n$, and $0$ denotes a matrix whose entries are zeroes. Thus, $\rank(X') \leq r(m)$ if and only if $\rank(X) \leq 1$ and $X' \geq 0$ if and only if $X \geq 0$. In addition, $\tr(W'X') = \tr(WX)$. Thus, the rank-constrained SDP in \eqref{pfsdp:2} is equivalent to the one in \eqref{pfsdp:1}. Thus, in order to solve weighted Max-Cut, it suffices to solve the rank-constrained SDP in \eqref{pfsdp:2}. Note that the rank-constrained SDP in \eqref{pfsdp:2} has 
\begin{equation} \label{pfsdp:count_constraint}
    \sum_{i = n}^{n+r(m)-1}i = \frac{(2n+r(m)-1)r(m)}{2} = (n-1)r(m) + \frac{r(m)(r(m)+1)}{2}
\end{equation}
many linear constraints. Since $n = \phi(m)$,
\begin{equation} \label{pfsdp:count_constraint2}
    (n-1)r(m) + \frac{r(m)(r(m)+1)}{2} \leq m.
\end{equation}
By adding superfluous linear constraints to \eqref{pfsdp:2}, we get a rank-constrained SDP which is equivalent to \eqref{pfsdp:2} and has exactly $m$ linear constraints.

\pmb{$(3)$}
Finally, we can apply algorithm $\mathcal{A}$ to solve this rank-constrained SDP in $\poly(n+r(m)-1)$ time. Since we search for $m$ in $[n-1,\lceil n/\delta \rceil^2]$ in step $(1)$, $m \geq n-1$. Since $n \geq C$, $m \geq C-1 \geq M$. Thus,
\begin{equation} \label{pfsdp:31}
    r(m) < 2^{1/2 - \epsilon} \sqrt{m}.
\end{equation}
Since $n = \phi(m) \geq \delta \sqrt{m}$ by equation \eqref{pfrank:01},
\begin{equation*}
    r(m) = \mathcal{O}(n),
\end{equation*}
by equation \eqref{pfsdp:31}. Thus, algorithm $\mathcal{A}$ solves the rank-constrained SDP in $\poly(n+r(m)-1) = \poly(n)$ time.

\pmb{Case when $n<C$}
If the number of nodes $n$ is less than $C$, we solve weighted Max-Cut by brute force. We simply trying each of the $2^n$ possible partitions of the vertex set and compute the value of the cut in each case. Then, we take the maximum one. Since $n<C$, this takes at most $\mathcal{O}(2^C)$ time, which is a constant. Thus, overall, the algorithm runs in $\poly(n) + \mathcal{O}(2^C) = \poly(n)$ time.

\pmb{General Case:} \par
Now, we drop the assumption that $r(m+1) \geq r(m)$ for all $m$. Note that the only place we used this assumption in the proof of special case is to show that
\begin{equation*}
    \phi(m+1) - \phi(m) \leq 1.
\end{equation*}
The way to avoid using this assumption is to change the definition of $\phi(m)$. First, note that in the proof of the special case, we used $r(m)$ only when $m \geq M$. Thus, we can assume without loss of generality that
\begin{equation} \label{pfgen:newr}
    r(m) < 2^{1/2 - \epsilon} \sqrt{m}, \quad \textrm{for all} \quad m \in \mathbb{Z}^+.
\end{equation}
Then, define $\Tilde{r}:\mathbb{Z}^+ \longrightarrow \mathbb{Z}^+$ as
\begin{equation} \label{pfgen:newr2}
    \Tilde{r}(m) = \max(r(i):i = 1,\dots,m).
\end{equation}
Clearly, 
\begin{equation} \label{pfgen:eq:02}
    \Tilde{r}(m+1) \geq \Tilde{r}(m), \quad \textrm{for all} \hspace{1mm} m.
\end{equation}
Note that for any $m \in \mathbb{Z}^+$,
\begin{equation} \label{pfgen:eq:01}
\begin{split}
    \Tilde{r}(m) & = r(t) \quad \textrm{for some} \hspace{1mm} t \leq m \\
    & \stackrel{(a)} < 2^{1/2 - \epsilon} \sqrt{t} \\
    & \leq 2^{1/2 - \epsilon} \sqrt{m},
\end{split}
\end{equation}
where we used equation \eqref{pfgen:newr} in $(a)$. Let
\begin{equation} \label{pfgen:newphi}
    \phi(m) = \Biggl\lfloor \bigg(m - \frac{\Tilde{r}(m)(\Tilde{r}(m)+1)}{2} \bigg) \bigg/ \Tilde{r}(m) \Biggr\rfloor + 1.
\end{equation}
The skeleton of the algorithm is similar as before. Given an input graph $G$ with $n$ nodes and a weight function $w$, we consider two cases $(i) n \geq C$ and $(ii) n < C$ separately, where $C$ is some constant which only depends on $\epsilon$ and $M$. We will pick $C$ later in the proof but it can be determined before we receive the input of weighted Max-Cut. 

If $n \geq C$, our algorithm works as follows:
\begin{enumerate}
    \item find $m$ such that $n = \phi(m)$;
    \item construct a rank-constrained SDP with $m$ linear constrains that is equivalent to weighted Max-Cut on the weighted graph $(G,w)$;
    \item solve this rank-constrained SDP using algorithm $\mathcal{A}$.
\end{enumerate}
If $n<C$, we use brute force to solve the problem. Now, we discuss each step in detail.

\pmb{$(1)$} By equations \eqref{pfgen:eq:01}, \eqref{pfgen:eq:02}, and \eqref{pfgen:newphi}, we can get
\begin{equation} \label{pfgen:eq:318}
\begin{split}
    & \phi(m) \geq \delta \sqrt{m}; \\
    & \phi(m+1) - \phi(m) \leq 1; \\
    & \phi(n-1) \leq n; \\
    & \phi(\lceil n/\delta \rceil^2) \geq n,
\end{split}
\end{equation}
in the same way as we did in the special case of the proof. Thus, this step remains unchanged.

\pmb{$(2)$} Once we find $m$ such that $n = \phi(m)$, we consider the rank-constrained SDP in \eqref{pfsdp:2}, in exactly the same way as we did in the special case. Note that we use $r(m)$ instead of $\Tilde{r}(m)$ here. It is important to note that we use $\Tilde{r}(m)$ solely to choose the value of $m$. After that, we only use $r(m)$. Thus, the number of linear constraints in \eqref{pfsdp:2} is exactly the same as we counted in equation \eqref{pfsdp:count_constraint}, which is $(n-1)r(m) + r(m)(r(m)+1)/2$. Then,
\begin{equation}
\begin{split}
    & (n-1)r(m) + r(m)(r(m)+1)/2 \\
    & \stackrel{(a)} \leq (n-1)\Tilde{r}(m) + \Tilde{r}(m)(\Tilde{r}(m)+1)/2 \\
    & \stackrel{(b)} \leq m,
\end{split}
\end{equation}
where we used equation \eqref{pfgen:newr2} in $(a)$ and equation \eqref{pfgen:newphi} in $(b)$. The rest of this step remains unchanged.

\pmb{$(3)$}
Finally, we need to show that $r(m) = \mathcal{O}(n)$. This holds since $n = \phi(m) \geq \delta \sqrt{m}$ by equation \eqref{pfgen:eq:318}, and $r(m) < 2^{1/2 - \epsilon} \sqrt{m}$ by equation \eqref{pfgen:newr}.

The case when $n<C$ is exactly the same as before.
\end{proof}

\begin{corollary} \label{cor:sdp_feasible}
Let $A_1,\dots,A_m \in \mathbb{S}^n$, $b_1, \dots, b_m \in \mathbb{R}$, and $r:\mathbb{Z}^+ \longrightarrow \mathbb{Z}^+$ be given. Suppose that there exist constants $M, \epsilon>0$, such that
\begin{equation*}
    r(m) < 2^{1/2 - \epsilon} \sqrt{m}, \quad \textrm{for all} \quad m \geq M.
\end{equation*}
Then, the rank-constrained SDP feasibility problem:
\begin{equation} \label{eq:sdp_feasibility}
\begin{split}
\operatorname{Find}
\hspace{2mm} & X \in \mathbb{S}^n \\
\operatorname{subject \hspace{1mm} to}
\hspace{2mm} & \tr(A_i X) = b_i, \; i = 1, \dots, m; \\
&  X \geq 0; \\
& \rank(X) \leq r(m),
\end{split}
\end{equation}
is NP-hard.
\end{corollary}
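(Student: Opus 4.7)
The plan is to reduce from the decision version of weighted Max-Cut, which is NP-hard, following the template of Theorem~\ref{thm:00} but with one additional device to convert an objective threshold into an equality constraint.

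First I would set up the reduction. Given an instance $(G,w,k)$ of Max-Cut decision, the question ``does $(G,w)$ admit a cut of weight at least $k$?'' is equivalent, by the SDP formulation in \eqref{eq:maxcut}, to the existence of $X \in \mathbb{S}^n$ with $X_{ii}=1$, $X \geq 0$, $\rank(X) \leq 1$, and $\tr(WX) \leq -k$, where $W$ is the weight matrix. Since \eqref{eq:sdp_feasibility} allows only equality constraints, the inequality $\tr(WX) \leq -k$ must be encoded differently.

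My encoding would augment $X$ to $\widetilde{X} \in \mathbb{S}^{n+1}$ and add the single equality $\tr(\widetilde{W}\widetilde{X}) = -k$, where $\widetilde{W}$ has $W$ in the upper-left $n \times n$ block and a $1$ in the $(n{+}1,n{+}1)$ entry, alongside the diagonal constraints $\widetilde{X}_{ii} = 1$ for $i \leq n$, $\widetilde{X} \geq 0$, and $\rank(\widetilde{X}) \leq 1$. If $\widetilde{X} = vv^T$ then the equality reads $\tr(WX) + v_{n+1}^2 = -k$ with $X = v_{1:n}v_{1:n}^T$, which is solvable over $v_{n+1} \in \mathbb{R}$ precisely when $\tr(WX) \leq -k$. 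The PSD condition automatically enforces $v_{n+1}^2 \geq 0$, so the slack variable is correctly constrained, and the rank-$1$ feasibility problem on $\widetilde{X}$ has a solution if and only if the Max-Cut decision instance is affirmative.

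With this rank-$1$ feasibility reformulation in hand, I would apply verbatim the rank-boosting construction from the proof of Theorem~\ref{thm:00}, embedding $\widetilde{X}$ as a diagonal block of a larger matrix together with an $I_{r(m)-1}$ block and the corresponding zero off-diagonal constraints. The linear constraint count agrees with the theorem's count up to one extra equality (the objective encoding), so a trivial adjustment of $\phi(m)$ shows that for large enough $n$ one can find a suitable $m$ in $\mathcal{O}(n^2)$ time, with the overall transformation of polynomial size since $r(m) = \mathcal{O}(n)$ by \eqref{eq:rank} together with $\phi(m) = \Omega(\sqrt{m})$. The main technical obstacle is the joint bookkeeping of the two augmentations (slack and identity block) to ensure their combined effect on the constraint count and ambient dimension still fits within the budget $r(m) < 2^{1/2-\epsilon}\sqrt{m}$, but this is routine because the slack contributes only a single additive term to both counts.
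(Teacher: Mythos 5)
Your proposal is correct, but it takes a genuinely different route from the paper. The paper gives a Turing reduction: it assumes a polynomial-time feasibility oracle, reuses the rank-boosting construction of Theorem~\ref{thm:00} to encode \emph{unweighted} Max-Cut as a rank-constrained SDP, and then recovers the optimal cut value by bisection search on a threshold $c$ for the objective, invoking the oracle $\mathcal{O}(\log n)$ times and using the facts that the optimum is an integer lying in $[0,n^2]$. You instead give a single-query, Karp-style reduction from the \emph{decision} version of weighted Max-Cut: the threshold $\tr(WX)\leq -k$ is turned into one equality constraint by augmenting the rank-one matrix with an extra coordinate whose diagonal entry $v_{n+1}^2$ serves as a nonnegative slack (nonnegativity being forced by positive semidefiniteness, and the rank staying at one since $\widetilde{X}=vv^T$), after which the same identity-block rank-boosting applies with only a constant shift in the constraint count (your construction uses $r(m)n + r(m)(r(m)+1)/2$ equalities in dimension $n+r(m)$ versus the paper's $r(m)n+r(m)(r(m)-1)/2$ in dimension $n+r(m)-1$), so the $\phi(m)$ bookkeeping, the $\tilde{r}$ device for non-monotone $r$, and the brute-force case for small $n$ carry over unchanged. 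Your approach buys two things: it needs only one oracle call and no integrality or boundedness of the optimum, and it stays strictly within the equality-constrained format of \eqref{eq:sdp_feasibility}, whereas the paper's step of ``writing the objective as $\tr(AX)\leq c$'' implicitly needs exactly the kind of slack encoding you supply (or an enlarged feasibility template). The paper's bisection argument, in exchange, avoids introducing the auxiliary coordinate and reuses the Theorem~\ref{thm:00} construction with no modification to the constraint pattern. Both arguments are valid; just make sure, when you say you apply the Theorem~\ref{thm:00} machinery ``verbatim,'' that you explicitly note the adjusted constraint count and ambient dimension in the definition of $\phi(m)$, since that is the only place the extra slack coordinate enters.
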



\begin{proof}
Suppose that there is a polynomial time algorithm which solves \eqref{eq:sdp_feasibility}. We call this algorithm a feasibility oracle. Then we show that we can also solve the unweighted Max-Cut problem in polynomial time using this feasibility oracle. Let $G = (V,E,w)$ be a weighted graph. Since we are solving the unweighted Max-Cut problem, we may assume that $w(i,j) \in \{0,1\}$ for all $i,j \in V$. Let $n = |V|$ be the number of nodes in $G$.

By the same arguments as in the proof of Theorem \ref{thm:00}, unweighted Max-Cut is equivalent to some rank-constrained SDP
\begin{equation} \label{pf:sdp_fea:01}
\begin{split}
\underset{X \in \mathbb{S}^N}{\textrm{minimize}} 
\hspace{2mm} & \tr(AX) \\
\textrm{subject to}
\hspace{2mm} & \tr(A_i X) = b_i, \; i = 1, \dots, m; \\
&  X \geq 0; \\
& \rank(X) \leq r(m),
\end{split}
\end{equation}
where $N = n+r(m)-1 = \poly(n)$, when $n \geq C$ for some constant $C$. When $n < C$, we can use brute force to find the solution to unweighted Max-Cut problem in $2^{|E|} = \mathcal{O}(2^{C^2}) = \mathcal{O}(1)$ time. Then, it suffices to solve \eqref{pf:sdp_fea:01} using the feasibility oracle. To solve \eqref{pf:sdp_fea:01}, we just write the objective in \eqref{pf:sdp_fea:01} as a linear constraint $\tr(AX) \leq c$ and try for different $c$ by bisection search. For the unweighted Max-Cut problem, we know that the objective is bounded below by $0$ and bounded above by $n^2$. Since we know that the solution to unweighted Max-Cut is an integer, we can find the solution by applying the feasibility oracle $O(\log n)$ times. Thus, we could solve the unweighted Max-Cut in polynomial time. Since unweighted Max-Cut is NP-hard, we are done.
\end{proof}

\section{Sparsity-Constrained Problems}
In this section, we study sparsity-constrained problems. We start with sparsity-constrained Linear Programming (LP):
\begin{equation} \label{def:sclp}
\begin{split}
    \underset{x\in \mathbb{R}^n}{\textrm{minimize}} \hspace{2mm} & c^Tx \\
    \textrm{subject to}  \hspace{2mm} & Ax = b; \\
                      & x \geq 0; \\
                      & \card(x) \leq \kappa,
\end{split}
\end{equation}
where $A \in \mathbb{R}^{m \times n}, b \in \mathbb{R}^m, c \in \mathbb{R}^n$, $\card(x)$ is the cardinality of $x$, which is the number of nonzero entries of $x$, and $\kappa \geq 0$ is a constant. In this case, cardinality is the analogue of rank in SDP. To be specific, if we write LP as SDP such that $x$ is mapped to $X = \diag(x)$, where $\diag(x)$ denotes the diagonal matrix whose diagonal entries are entries of $x$, then $\card(x) = \rank(X)$. Unlike rank-constrained SDP, sparsity constrained LP \eqref{def:sclp} is polynomial time solvable for any constant $\kappa$.




\begin{theorem} \label{thm:lp:complexity}
Let $A \in \mathbb{R}^{m \times n}, b \in \mathbb{R}^m$, and $c \in \mathbb{R}^n$ be given. Then, for any constant $\kappa \geq 0$, the sparsity-constrained LP problem
\[
\begin{split}
    \underset{x\in \mathbb{R}^n}{\operatorname{minimize}} \hspace{2mm} & c^Tx \\
    \operatorname{subject \hspace{1mm} to}  \hspace{2mm} & Ax = b; \\
                      & x \geq 0; \\
                      & \card(x) \leq \kappa,
\end{split}
\]
is polynomial time solvable.
\end{theorem}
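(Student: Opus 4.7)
The plan is to exploit the fact that $\kappa$ is a constant (not part of the input), so the number of possible supports of a feasible $x$ is only polynomially large in $n$. Specifically, any feasible $x$ with $\card(x) \leq \kappa$ has its nonzero entries indexed by some subset $S \subseteq [n]$ with $|S| \leq \kappa$, and the number of such subsets is $\sum_{k=0}^{\kappa}\binom{n}{k} = O(n^{\kappa})$, which is polynomial in $n$ since $\kappa$ is a constant.

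The algorithm then proceeds by enumeration. For each subset $S \subseteq [n]$ with $|S| \leq \kappa$, I would restrict the problem to variables supported on $S$: set $x_i = 0$ for $i \notin S$, and solve the resulting standard (unconstrained in cardinality) LP
\[
\begin{split}
    \underset{x_S \in \mathbb{R}^{|S|}}{\textrm{minimize}} \hspace{2mm} & c_S^T x_S \\
    \textrm{subject to}  \hspace{2mm} & A_S x_S = b; \\
                      & x_S \geq 0,
\end{split}
\]
where $A_S$ denotes the submatrix of $A$ with columns indexed by $S$ and similarly for $c_S$. This is an ordinary LP and can be solved in polynomial time, e.g.\ by interior point methods or the ellipsoid method. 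Record its optimal value (or infeasibility) together with the optimizer. After looping over all such $S$, output the subset whose LP has the smallest optimal value, together with the corresponding optimizer padded with zeros outside $S$.

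Correctness is immediate: every feasible point of the sparsity-constrained LP is feasible in exactly one of these restricted LPs (namely, the one corresponding to its support, or any superset of size $\leq \kappa$), and conversely every feasible point of any restricted LP has cardinality at most $\kappa$ and is therefore feasible for the original problem. Hence the minimum over the $O(n^{\kappa})$ subproblems equals the optimum of \eqref{def:sclp}. For the runtime, we solve $O(n^{\kappa})$ LPs, each of size polynomial in the input, for a total running time polynomial in $n$. There is no genuine obstacle here; the only subtle point is that the polynomial degree depends on $\kappa$, so the algorithm is polynomial for each fixed $\kappa$ but not uniformly in $\kappa$ (equivalently, the problem lies in $\mathsf{XP}$ parameterized by $\kappa$), which is precisely what the theorem statement asserts.
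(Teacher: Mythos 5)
Your proposal is correct and follows essentially the same route as the paper: enumerate the $O(n^{\kappa})$ candidate supports (the paper equivalently enumerates the complementary zero-sets of size $n-\kappa$), solve an ordinary LP restricted to each support, and return the best optimizer padded with zeros. No substantive difference in approach or in the runtime analysis.
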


\begin{proof}
To solve this problem, it suffices to solve $\binom{n}{\kappa}$ many LP problems. We encode each LP by a set $S \subset [n]$ of size $n-\kappa$. For each $S$, we set the variable $x_i$ to be $0$ for all $i \in S$. Then we solve the resulting LP defined as follows. Let $c = (c_1,\dots,c_n)^T$ and $A = (a_1,\dots,a_n)$, where $a_i \in \mathbb{R}^m$. Let $c_S$ be the subvector of $c$ obtained by dropping the entries $c_i$ for all $i \in S$. Similarly, let $A_S$ be the submatrix of $A$ obtained by dropping columns $a_i$ for all $i \in S$. Then, the LP corresponding to $S$ is defined as
\[
\begin{split}
    \underset{x' \in \mathbb{R}^{n-\kappa}}{\textrm{minimize}} \hspace{2mm} & c_S^T x' \\
    \textrm{subject to}  \hspace{2mm} & A_S x' = b; \\
                      & x' \geq 0.
\end{split}
\]
If the above LP is solvable, we keep its solution $x_S$ and optimal value $y_S$. If it is not solvable, we do nothing. If none of the $\binom{n}{\kappa}$ LP is solvable, we claim that no solution to the sparsity-constrained LP exists. Otherwise, let $S^*$ be the set such that $y_{S^*}$ is minimal among all $y_S$'s. Then, we output the solution $\Tilde{x}_{S^*}$ defined by 
\[
\begin{cases}
    \Tilde{x}_{S^*}[i] = 0 \hspace{2mm} &\textrm{if}  \hspace{2mm} i \in S^*\\
    \Tilde{x}_{S^*}[i] = x_{S^*}[k_i] \hspace{2mm} &\textrm{if}  \hspace{2mm} i \notin S^*,
\end{cases}
\]
where $i$ is the $k_i$th element in $[n]-S^*$, and the optimal value $y_{S^*}$. Since LP is polynomial time solvable and $\binom{n}{\kappa} \leq n^\kappa$ is polynomial in $n$, this algorithm runs in polynomial time.
\end{proof}

Note that the above result does not contradict the fact that finding minimum-cardinality solution for LP is NP-hard \cite{Garey1979}. The reason is that Theorem \ref{thm:lp:complexity} only applies to constant constraint on cardinality. However, in order to find minimum-cardinality solution for LP, we need to consider cardinality constraints that depend on $n$.

Parallel to the rank reduction results in SDP \cite{Barvinok1995, LemonSoYe2015, Pataki1998}, every feasible LP has a solution $x^*$ whose cardinality is at most $m$ \cite{Cara1907, LemonSoYe2015, Ye2008}.


Next, we extend the sparsification techniques in LP to Quadratically Constrained Quadratic Program (QCQP) and Second Order Cone Programming (SOCP).
Before considering QCQP and SOCP sparsification, we make a simple observation on LP sparsification, which will be used in QCQP and SOCP sparsification. Consider the following LP:
\begin{equation} \label{def:lp}
\begin{split}
    \underset{x\in \mathbb{R}^n}{\textrm{minimize}} \hspace{2mm} & c^Tx \\
    \textrm{subject to}  \hspace{2mm} & Ax = b; \\
                      & x \geq 0, \\
\end{split}
\end{equation}
where $A \in \mathbb{R}^{m \times n}, b \in \mathbb{R}^m$, and $c \in \mathbb{R}^n$. Given a solution $y$ to \eqref{def:lp}, there is an efficient algorithm to find another solution $x^*$ to \eqref{def:lp}, whose cardinality is at most $m$ \cite{Cara1907, LemonSoYe2015, Ye2008}. Now, we observe that the same result holds without the condition $x \geq 0$.

\begin{lemma} \label{lem:lp_sp}
Let $A \in \mathbb{R}^{m \times n}, b \in \mathbb{R}^m$, and $c \in \mathbb{R}^n$. Suppose that the following LP
\begin{equation} \label{def:lp2}
\begin{split}
    \underset{x\in \mathbb{R}^n}{\operatorname{minimize}} \hspace{2mm} & c^Tx \\
    \operatorname{subject \hspace{1mm} to}  \hspace{2mm} & Ax = b;
\end{split}
\end{equation}
has a finite optimal value (i.e. the problem is feasible and the objective is bounded). Then, there exists a solution $x^*$ to \eqref{def:lp2} such that $\card(x^*) \leq m$. Moreover, $x^*$ can be found in polynomial time.
\end{lemma}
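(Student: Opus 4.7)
The plan is to reduce to the standard LP sparsification result (with nonnegativity constraints) cited in the paragraph preceding the lemma. Since the variable $x \in \mathbb{R}^n$ is free in \eqref{def:lp2}, I would introduce auxiliary nonnegative variables by the usual splitting $x = x^+ - x^-$ with $x^+, x^- \in \mathbb{R}^n_{\geq 0}$, and form the LP
\[
\begin{split}
\underset{(x^+,x^-)\in \mathbb{R}^{2n}}{\operatorname{minimize}} \hspace{2mm} & (c^T,\, -c^T)\begin{bmatrix} x^+ \\ x^- \end{bmatrix} \\
\operatorname{subject \hspace{1mm} to} \hspace{2mm} & (A,\, -A)\begin{bmatrix} x^+ \\ x^- \end{bmatrix} = b; \\
& x^+,\, x^- \geq 0.
\end{split}
\]
This is a standard nonnegative LP with $2n$ variables and $m$ equality constraints, precisely the form handled by the LP sparsification algorithm of \cite{Cara1907, LemonSoYe2015, Ye2008}.

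The first step of the verification is to show that the transformed LP has a finite optimal value with the same optimum as \eqref{def:lp2}. Feasibility transfers by decomposing any feasible $x$ of \eqref{def:lp2} as $x_i^+ = \max(x_i,0)$, $x_i^- = \max(-x_i,0)$. For the objective, any feasible $(x^+,x^-)$ yields $x \defeq x^+ - x^-$ feasible in \eqref{def:lp2} with $(c^T,-c^T)(x^+;x^-) = c^T x$; conversely, the decomposition above preserves the value. Hence the optimal values match and boundedness transfers.

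Now I would invoke the cited LP sparsification result on the transformed LP to obtain, in polynomial time, an optimal solution $(\tilde{x}^+, \tilde{x}^-)$ with $\card((\tilde{x}^+, \tilde{x}^-)) \leq m$. Set $x^* \defeq \tilde{x}^+ - \tilde{x}^-$. Then $Ax^* = (A,-A)(\tilde{x}^+;\tilde{x}^-) = b$, and $c^Tx^*$ equals the optimum of the transformed LP, which coincides with the optimum of \eqref{def:lp2}, so $x^*$ is optimal for \eqref{def:lp2}. Finally, by subadditivity of support,
\[
\card(x^*) = \card(\tilde{x}^+ - \tilde{x}^-) \leq \card(\tilde{x}^+) + \card(\tilde{x}^-) = \card((\tilde{x}^+, \tilde{x}^-)) \leq m.
\]

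There is no real obstacle here: the only thing to watch is that cardinality survives the subtraction, which it does via subadditivity, and that the number of equality constraints remains $m$ after the splitting (doubling the variables does not introduce new equality constraints). Polynomial-time complexity is preserved because the transformed LP has size polynomial in the original, and both the LP solve and the sparsification routine are polynomial.
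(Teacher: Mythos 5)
Your proof is correct, but it takes a different reduction than the paper. The paper first solves \eqref{def:lp2} to get an optimal $y$, records the sign pattern $S=\{i: y_i<0\}$, and flips the signs of the corresponding columns of $A$ and entries of $c$; then $|y|$ is optimal for the resulting nonnegative LP, the cited sparsification is applied to $|y|$, and the signs are flipped back, so the sparse solution has exactly the same cardinality as the sparsified vector. You instead use the classical free-variable splitting $x = x^+ - x^-$, which is solution-independent: you verify (correctly, and this is the one place a careless splitting could go wrong) that the split LP has the same set of objective values as \eqref{def:lp2}, hence the same finite optimum, so no spurious unboundedness arises from the recession direction $(d,d)$; then a sparsified optimal $(\tilde{x}^+,\tilde{x}^-)$ with at most $m$ nonzeros in total yields $x^*=\tilde{x}^+-\tilde{x}^-$ with $\card(x^*)\leq m$ by subadditivity of support. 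The trade-offs are minor: your route doubles the number of variables and relies on subadditivity (which costs nothing here since the bound $m$ comes from the number of equality constraints, unchanged by the splitting), while the paper's route keeps $n$ variables and gives an exact cardinality correspondence but requires tailoring the transformed problem to a previously computed optimal solution. Both yield the bound $m$ and run in polynomial time, so your argument is a valid alternative proof of the lemma.
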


\begin{proof}
Let $y$ be a solution to \eqref{def:lp2}. Let $S = \{i \in [n]: y_i<0\}$. Let $c = (c_1,\dots,c_n)^T$ and $A = (a_1,\dots,a_n)$, where $a_i \in \mathbb{R}^m$. Let $\Tilde{c} \in \mathbb{R}^n$ be defined as
\[
\Tilde{c}_i =
\begin{cases}
     c_i & \quad \textrm{if} \quad i \notin S; \\
     -c_i & \quad \textrm{if} \quad i \in S.
\end{cases}
\]
Similarly, let $\Tilde{A} = (\Tilde{a}_1, \dots, \Tilde{a}_n)$ be defined as
\[
\Tilde{a}_i =
\begin{cases}
     a_i & \quad \textrm{if} \quad i \notin S; \\
     -a_i & \quad \textrm{if} \quad i \in S.
\end{cases}
\]
Now, consider the following LP:
\begin{equation} \label{def:lp3}
\begin{split}
    \underset{x\in \mathbb{R}^n}{\textrm{minimize}} \hspace{2mm} & \Tilde{c}^T x \\
    \textrm{subject to}  \hspace{2mm} & \Tilde{A}x = b; \\
    & x \geq 0.
\end{split}
\end{equation}
Let $\Tilde{y} = |y|$, where $|\cdot|$ is applied entry-wise. We claim that $\Tilde{y}$ is a solution to \eqref{def:lp3}. To see this, note that $\Tilde{A}\Tilde{y} = Ay = b$ by definition. Suppose that there exists a solution $\Tilde{z} \in \mathbb{R}^n$ to \eqref{def:lp3} such that $\Tilde{c}^T \Tilde{z}<\Tilde{c}^T\Tilde{y}$. Let $z \in \mathbb{R}^n$ be defined as
\[
z_i =
\begin{cases}
     \Tilde{z}_i & \quad \textrm{if} \quad i \notin S; \\
     -\Tilde{z}_i & \quad \textrm{if} \quad i \in S.
\end{cases}
\]
Then, $Az = \Tilde{A}\Tilde{z} = b$ and $c^Tz = \Tilde{c}^T\Tilde{z}<\Tilde{c}^T\Tilde{y} = c^Ty$, which contradicts the fact that $y$ is a solution to \eqref{def:lp2}. Thus, $\Tilde{y}$ is a solution to \eqref{def:lp3}.

Now, we apply the LP sparsification algorithm to $\Tilde{y}$ to get a solution $\Tilde{x}$ of \eqref{def:lp3} such that $\card(\Tilde{x}) \leq m$. Let $x^* \in \mathbb{R}^n$ be defined as
\[
x^*_i =
\begin{cases}
     \Tilde{x}_i & \quad \textrm{if} \quad i \notin S; \\
     -\Tilde{x}_i & \quad \textrm{if} \quad i \in S.
\end{cases}
\]
Then, $Ax^* = \Tilde{A}\Tilde{x} = b$ and $c^Tx^* = \Tilde{c}^T\Tilde{x} = \Tilde{c}^T\Tilde{y} = c^Ty$. Thus, $x^*$ is a solution to \eqref{def:lp2}. Note that $\card(x^*) = \card(\Tilde{x}) \leq m$.
\end{proof}

\subsection{QCQP sparsification}
Consider the following Quadratically Constrained Quadratic Program (QCQP):
\begin{equation*}
\begin{split}
    \underset{x \in \mathbb{R}^n}{\textrm{minimize}} \hspace{2mm} & x^TQ_0x + c_0^Tx \\
    \textrm{subject to}  \hspace{2mm} & x^TQ_ix + c_i^Tx + d_i \leq 0, \hspace{2mm} i = 1,\dots k; \\
    & Ax = b,
\end{split}
\end{equation*}
where $Q_i \in \mathbb{S}^n_+$, $c_i \in \mathbb{R}^n$ for each $i = 0,1,\dots,k$, $d_i \in \mathbb{R}^n$ for each $i = 1,\dots,k$, $A \in \mathbb{R}^{m \times n}$, and $b \in \mathbb{R}^m$. In this section, we first give a sparsification result on QCQP. Then, we show that this sparsification result cannot be improved without additional assumptions.

\begin{theorem} \label{thm:qcqp:sparsity}
Let $Q_i \in \mathbb{S}^n_+$, $c_i \in \mathbb{R}^n$ for each $i = 0,1,\dots,k$, $d_i \in \mathbb{R}^n$ for each $i = 1,\dots,k$, $A \in \mathbb{R}^{m \times n}$, and $b \in \mathbb{R}^m$ be given.
Suppose the following QCQP:
\begin{equation} \label{qcqp:sp:def}
\begin{split}
    \underset{x \in \mathbb{R}^n}{\textrm{minimize}} \hspace{2mm} & x^TQ_0x + c_0^Tx \\
    \textrm{subject to}  \hspace{2mm} & x^TQ_ix + c_i^Tx + d_i \leq 0, \hspace{2mm} i = 1,\dots k; \\
    & Ax = b,
\end{split}
\end{equation}
is feasible. Then there exists a solution $x^*$ to \ref{qcqp:sp:def} such that 
\[\card(x^*)\leq m - 1 + \sum_{i = 0}^k (\rank(Q_i)+1).\]
Moreover, $x^*$ can be found in polynomial time.
\end{theorem}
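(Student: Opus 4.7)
The strategy is to reduce QCQP sparsification to LP sparsification by linearizing around a fixed optimal solution. Let $y$ be an optimal solution to \eqref{qcqp:sp:def} (if the problem is feasible but unbounded the same idea applies to any feasible $y$, and no objective preservation is required). Since each $Q_i \in \mathbb{S}^n_+$, factor $Q_i = B_i^T B_i$ with $B_i \in \mathbb{R}^{r_i \times n}$ and $r_i = \rank(Q_i)$, so that $x^T Q_i x = \| B_i x \|^2$ for every $x$.

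The plan is then to consider the auxiliary linear program
\begin{equation*}
\begin{split}
\underset{x \in \mathbb{R}^n}{\operatorname{minimize}} \hspace{2mm} & c_0^T x \\
\operatorname{subject \hspace{1mm} to} \hspace{2mm} & Ax = b; \\
& B_i x = B_i y, \; i = 0, 1, \ldots, k; \\
& c_i^T x = c_i^T y, \; i = 1, \ldots, k.
\end{split}
\end{equation*}
Observe that $c_0$ is deliberately placed in the LP objective and is \emph{not} imposed as an equality; this single choice is what produces the ``$-1$'' in the stated bound. The number of scalar equality constraints is
\[
m + \sum_{i=0}^k r_i + k \;=\; m - 1 + \sum_{i=0}^k (r_i+1).
\]

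Next I would check that every LP-feasible $x$ is QCQP-feasible with objective value depending linearly on $c_0^T x$. Indeed, $B_i x = B_i y$ gives $x^T Q_i x = \| B_i y\|^2 = y^T Q_i y$, so for $i \geq 1$,
\[
x^T Q_i x + c_i^T x + d_i = y^T Q_i y + c_i^T y + d_i \leq 0,
\]
and the QCQP objective at $x$ equals $\| B_0 y\|^2 + c_0^T x$, which differs from the LP objective only by an additive constant. Since $y$ is itself LP-feasible, the LP has a finite optimum no smaller than $c_0^T y$; otherwise we could produce a strictly better QCQP objective than at $y$, contradicting optimality. Hence every LP-optimal $x^*$ satisfies $c_0^T x^* = c_0^T y$ and is in turn QCQP-optimal.

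Finally, apply Lemma \ref{lem:lp_sp} (the sign-unrestricted LP sparsification lemma, which is why we factored $Q_i$ rather than working with a nonnegative reformulation) to the auxiliary LP to extract in polynomial time an optimal $x^*$ with cardinality at most the number of equality constraints, namely $m - 1 + \sum_{i=0}^k (\rank(Q_i)+1)$. The main obstacle is the dimension bookkeeping: one must arrive at exactly the right count. The saving of one unit comes from the identification of $c_0^T x$ as the LP objective (so that objective preservation follows for free from optimality instead of costing a scalar equality), and the $r_i$ contributions come from the factorization $Q_i = B_i^T B_i$ that turns the quadratic equalities ``$x^T Q_i x = y^T Q_i y$'' into $r_i$ linear equalities ``$B_i x = B_i y$''.
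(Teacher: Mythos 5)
Your proposal is correct and matches the paper's own proof essentially step for step: factor $Q_i = P_i^T P_i$, fix an optimal $y$, form the auxiliary LP with constraints $P_i x = P_i y$, $c_i^T x = c_i^T y$ ($i \geq 1$), $Ax = b$ and objective $c_0^T x$, argue $y$ is LP-optimal by contradiction with QCQP-optimality, and sparsify via Lemma \ref{lem:lp_sp}. The constraint count and the source of the ``$-1$'' (keeping $c_0^T x$ as the objective rather than an equality) are exactly as in the paper.
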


\begin{proof}
For each $i = 0,1,\dots,k$, let $r_i = \rank(Q_i)$. Since $Q_i \in \mathbb{S}^n_+$, there exist an orthogonal matrix $U_i$ and a diagonal matrix $D_i$ such that $Q_i = U_i^T D_i U_i$ and 
\[
D_i = 
\begin{bmatrix}
B_i^2 & 0 \\
0 & 0
\end{bmatrix},
\]
where $B_i \in \mathbb{R}^{r_i \times r_i}$ is a diagonal matrix. Let
\[
P_i = \begin{bmatrix}
  B_i & 0
\end{bmatrix} U_i \in \mathbb{R}^{r_i \times n}.
\]
Then, $Q_i = P_i^T P_i$ and $x^T Q_i x = \Vert P_i x \rVert_2^2$. Let $y \in \mathbb{R}^n$ be a solution to the QCQP \eqref{qcqp:sp:def}. Now, consider the following LP:
\begin{equation} \label{pf:sp:qcqp:01}
\begin{split}
    \underset{x \in \mathbb{R}^n}{\textrm{minimize}} \hspace{2mm} & c_0^T x \\
    \textrm{subject to}  \hspace{2mm} & P_i x = P_i y, \hspace{2mm} i = 0,\dots,k; \\
                      & c_i^T x = c_i^T y, \hspace{2mm} i = 1,\dots,k; \\
                      & Ax = b.
\end{split}
\end{equation}
Note that the above LP has a finite optimal value and $y$ is a solution to it. To see this, first $y$ is clearly feasible. Second, if there is a solution $z$ such that $c_0^T z < c_0^T y$, then $z$ is a feasible point of the QCQP \eqref{qcqp:sp:def} and $z^T Q_i z + c_0^T z = \Vert P_i z \rVert_2^2 + c_0^T z < \Vert P_i y \rVert_2^2 + c_0^T y = y^T Q_i y+ c_0^T y$, which contradicts the fact that $y$ is a solution to the QCQP \eqref{qcqp:sp:def}. Thus, $y$ is a solution to the LP \eqref{pf:sp:qcqp:01}. Now, we can find a solution $x^*$ to the LP \eqref{pf:sp:qcqp:01} of cardinality at most
\[m - 1 + \sum_{i = 0}^k (\rank(Q_i)+1),\]
by Lemma \ref{lem:lp_sp}. Since $x^*$ and $y$ are both solution of \eqref{pf:sp:qcqp:01},
\[
c_0^T x^* = c_0^T y.
\]
Thus, 
\[
x{^*}^T Q_0 x^* + c_0^T x^* = \Vert P_0 x^* \rVert_2^2 + c_0^T y =  \Vert P_0 y \rVert_2^2 + c_0^T y = y^T Q_0 y + c_0^T y.
\]
Thus, $x^*$ is a solution to the QCQP \eqref{qcqp:sp:def}. Since LP sparsification can be done in polynomial time, we can find $x^*$ in polynomial time by the above procedure.
\end{proof}

Theorem \ref{thm:qcqp:sparsity} gives an upper bound on the minimal cardinality of solutions of QCQP. In the following example, we show that this bound is tight, which means that it cannot be improved without additional assumptions.

\begin{example}
Let $n,m,\rank(Q_0),\dots,\rank(Q_k) \in \mathbb{R}$ be given. To make the bound in Theorem \ref{thm:qcqp:sparsity} nontrivial, assume that
\[
m - 1 + \sum_{i = 0}^k (\rank(Q_i)+1) < n.
\]
For each $i = 0,1,\dots,k$, let
\[
r_i = \rank(Q_i), \quad \textrm{and} \quad s_i = \sum_{j = 0}^{i-1}(r_j+1).
\]
Note that $s_0 = 0$ since it is an empty sum. 
For each $i = 0,1,\dots,k$, let
\[
Q_i =
\begin{bmatrix}
0_{s_i \times s_i} & 0_{s_i \times r_i} & 0_{s_i \times (n - r_i - s_i)} \\
0_{r_i \times s_i} & I_{r_i} & 0_{r_i \times (n - r_i - s_i)} \\
0_{(n - r_i - s_i) \times s_i} & 0_{(n - r_i - s_i) \times r_i} & 0_{(n - r_i - s_i) \times (n - r_i - s_i)}
\end{bmatrix} \in \mathbb{R}^{n \times n},
\]
where $I_{r_i} \in \mathbb{R}^{r_i \times r_i}$ is the identity matrix and 
$0_{s,t} \in \mathbb{R}^{s \times t}$ denotes a matrix whose entries are zeros.
For each $i = 1,\dots,k$, let
\[
c_i = -e_{s_i + r_i + 1} - 2 \sum_{j = s_i+1}^{s_i+r_i} e_j,
\]
where $e_{s_i + r_i + 1} = (0,\dots,0,1,0,\dots,0)^T \in \mathbb{R}^n$ is the $s_i + r_i + 1$th standard basis vector. Let 
\[
c_0 = -2\sum_{i = 1}^{r_0}e_i + \sum_{i = 1}^k e_{s_i + r_i + 1}.
\]
Let
\[
A = 
\begin{bmatrix}
0_{m \times s_{k+1}} & I_{m} & 0_{m \times (n-m-s_{k+1})}
\end{bmatrix}.
\]
Consider the following QCQP:
\begin{equation} \label{eg:sp:qcqp:01}
\begin{split}
    \underset{x \in \mathbb{R}^n}{\textrm{minimize}} \hspace{2mm} & x^T Q_0 x + c_0^T x \\
    \textrm{subject to}  \hspace{2mm} & x^T Q_i x + c_i^T x + r_i + 1 \leq 0, \hspace{2mm} i = 1,\dots k; \\
    & Ax = \mathbbm{1}_m,
\end{split}
\end{equation}
where $\mathbbm{1}_m \in \mathbb{R}^m$ is a vector whose entries are ones. We claim that any solution to the above QCQP has at least $m - 1 + \sum_{i = 0}^k (\rank(Q_i)+1)$ nonzero entries. Let $z = (z_1,\dots,z_n) \in \mathbb{R}^n$ be a feasible point of the QCQP \eqref{eg:sp:qcqp:01}. Then, for each $i = 1,\dots,k$,
\begin{equation*}
    z^T Q_i z + c_i^T z + r_i + 1 \leq 0,
\end{equation*}
which implies
\begin{equation*}
    \sum_{j = s_i+1}^{s_i+r_i} z_j^2 - 2 \sum_{j = s_i+1}^{s_i+r_i} z_j - z_{s_i+r_i+1} + r_i + 1 \leq 0,
\end{equation*}
which implies
\begin{equation} \label{pf:eg:qcqp:sp:01}
    z_{s_i+r_i+1} \geq 1 + \sum_{j = s_i+1}^{s_i+r_i} (z_j^2 - 2 z_j + 1) \geq 1.
\end{equation}
Then,
\begin{equation} \label{pf:eg:qcqp:sp:02}
    z^T Q_0 z + c_0^T z = \sum_{i = 1}^{r_0} (z_i^2 - 2 z_i) + \sum_{i = 1}^k z_{s_i+r_i+1} \geq -r_0 + \sum_{i = 1}^k 1 = k - r_0,
\end{equation}
where the last step follows from equation \eqref{pf:eg:qcqp:sp:01}.
Let $x^* = (1,1,\dots,1,0,\dots,0) \in \mathbb{R}^n$ be a vector whose first $m - 1 + \sum_{i = 0}^k (r_i+1)$ entries are ones and the rest are zeros. Then, $x^*$ satisfies all the constraints in the QCQP \eqref{eg:sp:qcqp:01} and
\begin{equation} \label{pf:eg:qcqp:sp:03}
    x{^*}^T Q_0 x^* + c_0^T x^* = k - r_0.
\end{equation}
Thus, by equations \eqref{pf:eg:qcqp:sp:02} and \eqref{pf:eg:qcqp:sp:03}, the optimal value of the QCQP \eqref{eg:sp:qcqp:01} is $k - r_0$.

Now, let $y \in \mathbb{R}^n$ be a solution to the QCQP \eqref{eg:sp:qcqp:01}. We will show that $\card(y) \geq m - 1 + \sum_{i = 0}^k (\rank(Q_i)+1)$. By equation \eqref{pf:eg:qcqp:sp:01}, we have
\begin{equation*}
    y_{s_i+r_i+1} \geq 1,
\end{equation*}
for all $i = 1,\dots,k$. Since $y$ is optimal, we have
\begin{equation*}
    k - r_0 = y^T Q_0 y + c_0^T y = \sum_{i = 1}^{r_0} (y_i^2 - 2y_i) + \sum_{i = 1}^k y_{s_i+r_i+1} \geq -r_0 + \sum_{i = 1}^k 1 = k - r_0,
\end{equation*}
which implies that 
\begin{equation} \label{pf:eg:qcqp:sp:04}
    y_{s_i+r_i+1} = 1,
\end{equation}
for all $i = 1,\dots,k$ and
\begin{equation} \label{pf:eg:qcqp:sp:05}
    y_i = 1,
\end{equation}
for all $i = 1,\dots,r_0$. By equations \eqref{pf:eg:qcqp:sp:04} and \eqref{pf:eg:qcqp:sp:01},
\begin{equation} \label{pf:eg:qcqp:sp:06}
    y_j = 1
\end{equation}
for all $j = s_i + 1, \dots, s_i + r_i$, for all $i = 1,\dots,k$. Then, equations \eqref{pf:eg:qcqp:sp:04}, \eqref{pf:eg:qcqp:sp:05}, \eqref{pf:eg:qcqp:sp:06}, together with the fact that $A y = \mathbbm{1}_m$ implies that
\[
y_i = 1
\]
for all $i = 1,\dots,m - 1 + \sum_{i = 0}^k (\rank(Q_i)+1)$. Thus,
\begin{equation*}
    \card(y) \geq m - 1 + \sum_{i = 0}^k (\rank(Q_i)+1).
\end{equation*}
This shows that our bound on Theorem \ref{thm:qcqp:sparsity} is tight.
\end{example}

\subsection{SOCP sparsification}
Consider the following Second Order Cone Programming (SOCP):
\begin{equation*}
\begin{split}
    \underset{x\in \mathbb{R}^n}{\textrm{minimize}} \hspace{2mm} & c^Tx \\
    \textrm{subject to}  \hspace{2mm} & \Vert A_ix + b_i \rVert_2 \leq c_i^Tx + d_i, \hspace{2mm} i = 1,\dots,k; \\
                      & Fx = g,
\end{split}
\end{equation*}
where $A_i \in \mathbb{R}^{m_i \times n}, b_i \in \mathbb{R}^{m_i}, c_i \in \mathbb{R}^n$, and $d_i \in \mathbb{R}$ for each $i = 1,\dots,k$, $F \in \mathbb{R}^{m \times n}, c \in \mathbb{R}^n$, and $g \in \mathbb{R}^{m}$.
In this section, we first give a sparsification result on SOCP. Then, we show that this sparsification result cannot be improved without additional assumptions.

\begin{theorem} \label{thm:socp:sparsity}
Let $A_i \in \mathbb{R}^{m_i \times n}, b_i \in \mathbb{R}^{m_i},  c_i \in \mathbb{R}^n$, and $d_i \in \mathbb{R}$ for each $i = 1,\dots,k$, $F \in \mathbb{R}^{m \times n}, c \in \mathbb{R}^n$, and $g \in \mathbb{R}^{m}$ be given.
Suppose the following SOCP:
\begin{equation} \label{def:socp}
\begin{split}
    \underset{x\in \mathbb{R}^n}{\textrm{minimize}} \hspace{2mm} & c^Tx \\
    \textrm{subject to}  \hspace{2mm} & \Vert A_ix + b_i \rVert_2 \leq c_i^Tx + d_i, \hspace{2mm} i = 1,\dots,k; \\
                      & Fx = g,
\end{split}
\end{equation}
is feasible. Then there exists a solution $x^*$ to \ref{def:socp} such that
\[
\card(x^*)\leq m + \sum_{i = 1}^k(m_i + 1).
\]
Moreover, $x^*$ can be found in polynomial time.
\end{theorem}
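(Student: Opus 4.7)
The plan is to mimic the QCQP sparsification argument of Theorem \ref{thm:qcqp:sparsity} by linearizing each second-order cone constraint into $m_i+1$ linear equalities---$m_i$ equalities that preserve the vector $A_ix+b_i$ sitting inside the Euclidean norm, plus one scalar equality that preserves the affine right-hand side $c_i^Tx+d_i$---and then feeding the resulting LP into Lemma \ref{lem:lp_sp}.

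Concretely, I would take a solution $y$ to the SOCP \eqref{def:socp} and form the auxiliary LP
\begin{equation*}
\begin{split}
    \underset{x\in \mathbb{R}^n}{\textrm{minimize}} \hspace{2mm} & c^T x \\
    \textrm{subject to} \hspace{2mm} & A_i x = A_i y, \hspace{2mm} i = 1,\dots,k; \\
                                    & c_i^T x = c_i^T y, \hspace{2mm} i = 1,\dots,k; \\
                                    & F x = g,
\end{split}
\end{equation*}
which carries exactly $m + \sum_{i=1}^k(m_i+1)$ linear equality constraints. The next step is to verify that $y$ itself is optimal for this LP: any LP-feasible $z$ satisfies $\Vert A_iz+b_i\rVert_2 = \Vert A_iy+b_i\rVert_2 \leq c_i^Ty+d_i = c_i^Tz+d_i$ for each $i$ and $Fz=g$, so $z$ is SOCP-feasible, and $c^Tz<c^Ty$ would contradict optimality of $y$ for the SOCP. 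In particular the LP has a finite optimal value.

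Applying Lemma \ref{lem:lp_sp} to this LP then produces, in polynomial time, another LP-optimal solution $x^*$ with $\card(x^*) \leq m + \sum_{i=1}^k(m_i+1)$. The final step is to confirm that $x^*$ is optimal for the original SOCP: the preserved equalities yield $\Vert A_ix^*+b_i\rVert_2 = \Vert A_iy+b_i\rVert_2 \leq c_i^Ty+d_i = c_i^Tx^*+d_i$ and $Fx^*=g$, so $x^*$ is SOCP-feasible, while $c^Tx^*=c^Ty$ by LP-optimality of both $x^*$ and $y$, so it matches the SOCP optimum.

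There is no serious obstacle; the argument is almost mechanical once the right linearization is written down. The only design choice that matters is preserving the vector $A_ix+b_i$ and the scalar $c_i^Tx+d_i$ separately (rather than, for example, trying to preserve only the scalar quantity $\Vert A_ix+b_i\rVert_2$, which is not linear in $x$), since this is what converts one cone constraint into exactly $m_i+1$ linear equalities and produces the bound in the statement.
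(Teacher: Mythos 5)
Your proposal is correct and follows essentially the same route as the paper: linearize each cone constraint by fixing $A_ix = A_iy$ and $c_i^Tx = c_i^Ty$, note the resulting LP has $m+\sum_{i=1}^k(m_i+1)$ equality constraints with $y$ optimal, and invoke Lemma \ref{lem:lp_sp}. Your verification that LP-feasible points remain SOCP-feasible (and hence that $x^*$ is SOCP-optimal) is, if anything, spelled out slightly more carefully than in the paper's own proof.
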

\begin{proof}
Let $y$ be a solution to the SOCP \eqref{def:socp}.
Then, consider the following LP:
\begin{equation} \label{pf:thm:socp:sp}
\begin{split}
    \underset{x \in \mathbb{R}^n}{\textrm{minimize}} \hspace{2mm} & c^Tx \\
    \textrm{subject to}  \hspace{2mm} & A_ix = A_iy, \hspace{2mm} i = 1,\dots,k; \\
                      & c_i^Tx = c_i^Ty, \hspace{2mm} i = 1,\dots,k; \\
                      & Fx = g.
\end{split}
\end{equation}
Note that the above LP has a finite optimal value and $y$ is a solution to it. To see this, first $y$ is clearly feasible. Second, if there is a solution $z$ such that $c^T z < c^T y$, then $z$ is a feasible point of the SOCP \eqref{def:socp} and $c^T z < c^T y$, which contradicts the fact that $y$ is a solution to the SOCP \eqref{def:socp}. Thus, $y$ is a solution to the LP \eqref{pf:thm:socp:sp}.
Now, we can find a solution $x^*$ to the LP \eqref{pf:thm:socp:sp} of cardinality at most 
\[m + \sum_{i = 1}^k(m_i + 1),\] 
by Lemma \ref{lem:lp_sp}. 
Since $x^*$ and $y$ are both solutions of \eqref{pf:thm:socp:sp},
\[
c^Tx^* = c^Ty.
\]
Thus, $x^*$ is a solution to the SOCP \eqref{def:socp}. Since LP sparsification can be done in polynomial time, we can find $x^*$ in polynomial time by the above procedure.
\end{proof}

Theorem \ref{thm:socp:sparsity} gives an upper bound on the minimal cardinality of solutions of SOCP. In the following example, we show that this bound is tight, which means that it cannot be improved without additional assumptions.

\begin{example}
Let $n,m,m_1,m_2,\dots,m_k \in \mathbb{R}$ be given. To make the bound in Theorem \ref{thm:socp:sparsity} nontrivial, assume that
\[m + \sum_{i = 1}^k (m_i+1) < n.\]
For each $i = 1,\dots,k+1$, let 
\[r_i = \sum_{j = 1}^{i-1} (m_j+1). \]
Note that $r_1 = 0$ since it is an empty sum.
For each $i = 1,\dots,k$, let 
\[c_i = e_{r_i + m_i + 1}\] and
\begin{equation*}
    E_i = 
    \begin{bmatrix}
    0_{m_i \times r_i} & I_{m_i} & 0_{m_i \times (n-r_{i+1}+1)}
    \end{bmatrix} \in \mathbb{R}^{m_i \times n}, 
\end{equation*}
where $I_{m_i} \in \mathbb{R}^{m_i \times m_i}$ is the identity matrix, $0_{s,t} \in \mathbb{R}^{s \times t}$ denotes a matrix whose entries are zeros, and $e_{r_i + m_i + 1} = (0,\dots,0,1,0,\dots,0)^T \in \mathbb{R}^n$ is the $r_i+m_i+1$th standard basis vector.
Let
\[
F = 
\begin{bmatrix}
0_{m \times r_{k+1}} & I_{m} & 0_{m \times (n-m-r_{k+1})}
\end{bmatrix}.
\]
Let 
\[
c = \sum_{i = 1}^k c_i.
\]
Consider the following SOCP:
\begin{equation} \label{socp:sp:tight:eg}
\begin{split}
    \underset{x \in \mathbb{R}^n}{\textrm{minimize}} \hspace{2mm} & c^Tx \\
    \textrm{subject to}  \hspace{2mm} & \Vert E_i x - \mathbbm{1}_{m_i} \rVert_2 \leq c_i^Tx - 1,\hspace{2mm} i = 1, \dots, k; \\
                      & Fx = \mathbbm{1}_m,
\end{split}
\end{equation}
where $\mathbbm{1}_d \in \mathbb{R}^d$ is a vector whose entries are ones.
We claim that any solution to the above SOCP has at least $m + \sum_{i = 1}^k (m_i+1)$ nonzero entries. Let $z$ be a feasible point of the SOCP \eqref{socp:sp:tight:eg}. Then, for each $i = 1, \dots, k$,
\begin{equation*}
    z_{r_i+m_i+1} - 1 = c_i^Tz -1 \geq \Vert E_i z - \mathbbm{1}_{m_i} \rVert_2 \geq 0,
\end{equation*}
which implies that
\begin{equation} \label{pf:socp:eg:03}
    z_{r_i+m_i+1} \geq 1
\end{equation}
for all $i = 1, \dots, k$. Thus,
\begin{equation} \label{pf:socp:eg:01}
    c^Tz = \sum_{i = 1}^k z_{r_i+m_i+1} \geq k.
\end{equation}
Let $x^* = (1,1,\dots,1,0,\dots,0) \in \mathbb{R}^n$ be a vector whose first $m + \sum_{i = 1}^k (m_i+1)$ entries are ones and the rest are zeros. Then, $x^*$ satisfies all the constraints in the SOCP \eqref{socp:sp:tight:eg} and
\begin{equation} \label{pf:socp:eg:02}
    c^Tx^* = k.
\end{equation}
Thus, by equations \eqref{pf:socp:eg:01} and \eqref{pf:socp:eg:02}, the optimal value of the SOCP \eqref{socp:sp:tight:eg} is $k$.

Now, let $y \in \mathbb{R}^n$ be a solution to the SOCP \eqref{socp:sp:tight:eg}. We will show that $\card(y) \geq m + \sum_{i = 1}^k (m_i+1)$. By equation \eqref{pf:socp:eg:03}, we have
\begin{equation} \label{pf:socp:eg:04}
    y_{r_i+m_i+1} \geq 1,
\end{equation}
for all $i = 1,\dots,k$.
Since $y$ is optimal, we have
\begin{equation} \label{pf:socp:eg:05}
    k = c^T y = \sum_{i = 1}^k y_{r_i+m_i+1}.
\end{equation}
By equations \eqref{pf:socp:eg:04} and \eqref{pf:socp:eg:05},
\begin{equation} \label{pf:socp:eg:06}
    y_{r_i+m_i+1} = 1,
\end{equation}
for all $i = 1,\dots,k$.
This implies that for each $i = 1,\dots,k$,
\begin{equation}
    \Vert E_iy - \mathbbm{1}_{m_i} \rVert_2 = 0,
\end{equation}
which implies that
\begin{equation} \label{pf:socp:eg:07}
    E_i y = \mathbbm{1}_{m_i}.
\end{equation}
Then, equations \eqref{pf:socp:eg:06} and \eqref{pf:socp:eg:07}, together with the fact that $F y = \mathbbm{1}_m$, imply that
\begin{equation*}
    y_i = 1
\end{equation*}
for all $i = 1,\dots,m+\sum_{j = 1}^k (m_j+1)$. Thus,
\begin{equation*}
    \card(y) \geq m+\sum_{j = 1}^k (m_j+1).
\end{equation*}
This implies that our bound in Theorem \ref{thm:socp:sparsity} is tight.
\end{example}


\section{Rank-Constrained Hyperbolic Programming}
In this section, we consider rank-constrained Hyperbolic Programming (HP), which unifies rank-constrained SDP and sparsity-constrained LP. We extend rank reduction techniques to rank-constrained QCQP and rank-constrained SOCP, which are special cases of rank-constrained HP. In addition, we study the complexity of these two optimization problems.

We first recall the definition of a hyperbolic polynomial \cite{Brand2011, hprank2, Renegar2006}.

\begin{definition}
A homogeneous polynomial $p: \mathbb{R}^n \longrightarrow \mathbb{R}$ is hyperbolic if there exists a direction $e \in \mathbb{R}^n$ such that $p(e) \neq 0$ and for each $x \in \mathbb{R}^n$ the univariate polynomial $t \mapsto p(x-te)$ has only real roots. The polynomial $p$ is said to be hyperbolic in direction $e$.
\end{definition}

Then, we recall the definition of characteristic polynomial and eigenvalues in HP \cite{Brand2011, hprank2, Renegar2006}.

\begin{definition}
Given $x \in \mathbb{R}^n$ and a polynomial $p: \mathbb{R}^n \longrightarrow \mathbb{R}$ that is hyperbolic in direction $e \in \mathbb{R}^n$, the characteristic polynomial of $x$ with respect to $p$ in direction $e$ is the univariate polynomial $\lambda \mapsto p(x - \lambda e)$. The roots of the characteristic polynomial are the eigenvalues of $x$.
\end{definition}

Next, we recall the definition of hyperbolic programming \cite{Brand2011, hprank2, Renegar2006}.

\begin{definition}
Given a hyperbolic polynomial $p: \mathbb{R}^n \longrightarrow \mathbb{R}$ that is hyperbolic in direction $e \in \mathbb{R}^n$, the hyperbolic cone for $p$ in direction $e$ is defined as
\begin{equation*}
    \Lambda_{++} \vcentcolon = \{x \in \mathbb{R}^n:\lambda_{\textrm{min}}(x) > 0 \},
\end{equation*}
where $\lambda_{\textrm{min}}(x)$ is the minimum eigenvalue of $x$. Let
\begin{equation*}
    \Lambda_+ \vcentcolon = \{x \in \mathbb{R}^n:\lambda_{\textrm{min}}(x) \geq 0 \}
\end{equation*}
be the closure of $\Lambda_{++}$.
\end{definition}

\begin{definition}
Given a hyperbolic polynomial $p: \mathbb{R}^n \longrightarrow \mathbb{R}$ that is hyperbolic in direction $e \in \mathbb{R}^n$, a hyperbolic program is an optimization problem of the form
\begin{equation} \label{def:hp}
\begin{split}
    \underset{x\in \mathbb{R}^n}{\textrm{minimize}} \hspace{2mm} & c^Tx \\
    \textrm{subject to}  \hspace{2mm} & Ax = b; \\
                      & x \in \Lambda_+,
\end{split}
\end{equation}
where $A \in \mathbb{R}^{m \times n}, b \in \mathbb{R}^m$, and $c \in \mathbb{R}^n$ are given.
\end{definition}

Finally, we recall the definition of rank in HP \cite{Brand2011, hprank2, Renegar2006}.

\begin{definition}
Let $p$ be a hyperbolic polynomial that is hyperbolic in direction $e \in \mathbb{R}^n$. The rank of $x \in \mathbb{R}^n$ is defined as $\rank(x) \vcentcolon = \deg p(e + tx)$, where $t$ is the indeterminate. Equivalently, $\rank(x)$ is the number of non-zero eigenvalues of $x$.
\end{definition}

Note that HP includes SDP and LP as special cases. Moreover, rank-constrained HP includes rank-constrained SDP and sparsity-constrained LP as special cases. To be specific, when $p(X) = \det(X)$ and $e = I$ (in this case, the domain of $p$ is the set of symmetric matrices which can be identified as $\mathbb{R}^{n(n+1)/2}$), HP becomes SDP and $\rank(X)$ is simply the usual rank of a matrix \cite{Brand2011, hprank2, Renegar2006}. In addition, when $p(x) = \prod_{i = 1}^n x_i$, where $x_i$ is the $i$th component of $x$ and $e = (1,1,\dots, 1)$, HP becomes LP and $\rank(x) = \card(x)$ \cite{Brand2011, hprank2, Renegar2006}.

\subsection{Rank-Constrained SOCP}
In this section, we study rank-constrained SOCP. We first define the rank of SOCP by viewing it as a HP. Then, we give a rank reduction result for SOCP. Next, we show that Max-Cut can be written as a rank-constrained SOCP. Finally, we study the complexity of rank-constrained SOCP and show that it is NP-hard in certain circumstances.

\subsubsection{SOCP rank reduction}
We consider the following SOCP:
\begin{equation} \label{hp:socp:def}
\begin{split}
    \underset{x\in \mathbb{R}^n}{\textrm{minimize}} \hspace{2mm} & c^Tx \\
    \textrm{subject to}  \hspace{2mm} & \Vert A_ix + b_i \rVert_2 \leq c_i^Tx + d_i, \hspace{2mm} i = 1,\dots,k; \\
                      & Fx = g,
\end{split}
\end{equation}
where $A_i \in \mathbb{R}^{m_i \times n}, b_i \in \mathbb{R}^{m_i}, c_i \in \mathbb{R}^n$, and $d_i \in \mathbb{R}$ for each $i = 1,\dots,k$, $F \in \mathbb{R}^{m \times n}, c \in \mathbb{R}^n$, and $g \in \mathbb{R}^{m}$. We first show that it can be written as a HP. This step is similar to the Lorentz cone example in \cite{Sendov2001}. First, we associate each second order cone constraint $\Vert A_ix + b_i \rVert_2 \leq c_i^T x + d_i$ with a new variable $y_i$. Let $y = (y_1, \dots, y_k)$. Let $z = (x,y)$. For each $j \in [m_i]$, let $a_{i,j}^T$ be the $j$th row of $A_i$, $b_{i,j}$ be the $j$th entry of $b_i$, and let $q_{i,j}(x) = a_{i,j}^T x + b_{i,j}$. Then let 
 \[
 p_i(z) = y_i^2 - \sum_{j = 1}^{m_i} q_{i,j}(x)^2.
 \]
Now let
\[
p(z) = \prod_{i = 1}^k p_i(z).
\]
Let $e = (0,\dots,0,1,\dots,1)$ where there are $n$ zeros followed by $k$ ones. For each $i$,
\begin{equation} \label{eq:2}
p_i(z - te) = (y_i - t)^2 - \sum_{j = 1}^{m_i} q_{i,j}(x)^2 = t^2 - 2y_it + \Biggl(y_i^2 - \sum_{j = 1}^{m_i} q_{i,j}(x)^2\Biggr).
\end{equation}
The discriminant
\[
\Delta_i = 4y_i^2 - 4\Biggl(y_i^2 - \sum_{j = 1}^{m_i} q_{i,j}(x)^2\Biggr) = 4\sum_{j = 1}^{m_i} q_{i,j}(x)^2 \geq 0.
\]
Thus, $p_i$ is hyperbolic in $e$ for all $i$. Hence, $p$ is hyperbolic in $e$. 
Note that roots of $p(z - te)$ are positive if and only if roots of $p_i(z - te)$ are positive for all $i$.
From equation (\ref{eq:2}), we see that this holds if and only if
\[
y_i \geq 0 \quad \textrm{and} \quad y_i^2 \geq \sum_{j = 1}^{m_i} q_{i,j}(x)^2.
\]
This is equivalent to
\[
y_i \geq \sqrt{\sum_{j = 1}^{m_i} q_{i,j}(x)^2} = \Vert A_ix + b_i \rVert_2.
\]
Now for each $i$, we add the linear constraint
\[
y_i = c_i^Tx + d_i.
\]
Then the resulting HP with the original linear constraint $Fx = g$ and the new linear constraints $y_i = c_i^Tx + d_1$ is equivalent to the SOCP problem \eqref{hp:socp:def}. 

Next, we consider the rank. Note that
\[
p_i(e + tz) = (ty_i + 1)^2 - \sum_{j = 1}^{m_i} q_{i,j}(x)^2 t^2 = \Biggl(y_i^2 - \sum_{j = 1}^{m_i} q_{i,j}(x)^2\Biggr) t^2 + 2y_it + 1.
\]
Then we have
\[
\deg(p_i) = \begin{cases}
     0 & \hspace{2mm} \textrm{if} \hspace{2mm} y_i^2 = \sum_{j = 1}^{m_i} q_{i,j}(x)^2 = 0 \\
     1 & \hspace{2mm} \textrm{if} \hspace{2mm} y_i^2 = \sum_{j = 1}^{m_i} q_{i,j}(x)^2 \neq 0 \\
     2 &  \hspace{2mm} \textrm{otherwise}.
\end{cases}
\]
Let $s(x)$ be the number of second order cone constraints that are satisfied with equality. Let $e(x)$ be the number of second order cone constraints that are satisfied with equality and both sides of the constraints are zero. 
Since
\[
\deg(p(e+tz)) = \sum_{i = 1}^k \deg(p_i(e + tz)),
\]
we have
\[
\rank(z) = 2k - s(x) - e(x).
\]
In addition, we define 
\[\rank(x) = \rank(z).\]


Now we consider SOCP rank reduction. 

\begin{theorem} \label{socp:rank:reduction}
Let $A_i \in \mathbb{R}^{m_i \times n}, b_i \in \mathbb{R}^{m_i},  c_i \in \mathbb{R}^n$, and $d_i \in \mathbb{R}$ for each $i = 1,\dots,k$, $F \in \mathbb{R}^{m \times n}, c \in \mathbb{R}^n$, and $g \in \mathbb{R}^{m}$ be given.
Suppose the following SOCP:
\begin{equation} \label{def:socp:hp}
\begin{split}
    \underset{x\in \mathbb{R}^n}{\textrm{minimize}} \hspace{2mm} & c^Tx \\
    \textrm{subject to}  \hspace{2mm} & \Vert A_ix + b_i \rVert_2 \leq c_i^Tx + d_i, \hspace{2mm} i = 1,\dots,k; \\
                      & Fx = g,
\end{split}
\end{equation}
is feasible and 
\[
\bigcap_{i \in [k+1]} \ker B_i = \{0\},
\]
where 
\[
B_i = \begin{bmatrix}
A_{i} \\
c_{i}^T
\end{bmatrix} \quad \textrm{for each} \hspace{2mm} i \in [k], \hspace{2mm} \textrm{and} \quad B_{k+1} = \begin{bmatrix}
F \\
c^T
\end{bmatrix}.
\]
Then there exists a solution $x^*$ to \ref{def:socp:hp} such that
\[
\rank(x) \leq 2k - \Bigl \lceil \frac{n}{\max(m,m_1,m_2,\dots,m_k)+1} \Bigr \rceil + 1.
\]
Moreover, $x^*$ can be found in polynomial time.
\end{theorem}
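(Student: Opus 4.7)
The plan is to establish the bound by an iterative rank-reduction algorithm analogous to the extreme-point constructions for SDP rank reduction and LP sparsification. I would begin with any optimal feasible $y$ (obtainable in polynomial time by a standard interior-point SOCP solver) and repeatedly perturb $y$ along a direction that preserves the linear equalities $Fx=g$, the objective $c^{T}x$, and all currently tight SOC constraints, while choosing the step length so that one new SOC constraint becomes tight. Since $\rank(x)=2k-s(x)-e(x)$ from the analysis preceding the theorem, and $s(x)\geq |T|$ where $T$ is the tight SOC index set, any lower bound on $|T|$ at termination directly yields the claimed upper bound on $\rank(x)$.

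Concretely, at the current iterate $y$ with tight set $T$, define
\[
V(y)=\bigcap_{i\in T\cup\{k+1\}}\ker B_{i}=\{d\in\mathbb{R}^{n}:Fd=0,\ c^{T}d=0,\ A_{i}d=0,\ c_{i}^{T}d=0\text{ for all }i\in T\}.
\]
Any $d\in V(y)$ leaves both sides of each tight SOC constraint unchanged along the line $y+td$ and preserves the affine equalities and the objective. If $V(y)\neq\{0\}$, pick $d\in V(y)\setminus\{0\}$; for each $i\notin T$, the feasibility of $y+td$ becomes a quadratic inequality
\[
f_{i}(t)=(c_{i}^{T}y+d_{i}+t\,c_{i}^{T}d)^{2}-\|A_{i}y+b_{i}+t\,A_{i}d\|_{2}^{2}=\alpha_{i}+2\beta_{i}t+\gamma_{i}t^{2}\geq 0,
\]
with $f_{i}(0)=\alpha_{i}>0$, solvable in closed form. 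Let $t^{*}$ be the smallest $|t|$ at which some $f_{i}$ first vanishes, using whichever of $+d$ or $-d$ gives a finite step; set $y\leftarrow y+t^{*}d$, which makes a new $i^{*}\notin T$ tight and enlarges $T$.

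The main technical point is to argue that such a finite $t^{*}$ always exists. If neither $+d$ nor $-d$ yielded a finite step, then $y+td$ would be feasible for every $t\in\mathbb{R}$; sending $t\to\pm\infty$ would place both $d$ and $-d$ in the recession cone of the SOCP feasible region, hence $d$ in its lineality space $\{d:Fd=0,\ A_{i}d=0,\ c_{i}^{T}d=0\ \forall\, i\in[k]\}$. Combined with $c^{T}d=0$ from membership in $V(y)$, this forces $d\in\bigcap_{i\in[k+1]}\ker B_{i}=\{0\}$ by hypothesis, contradicting $d\neq 0$. I expect this to be the main obstacle, because the purely local analysis of $f_{i}$ does \emph{not} suffice: one can have $\gamma_{i}>0$ and $\beta_{i}^{2}<\alpha_{i}\gamma_{i}$, making $f_{i}>0$ for all $t$ even when $A_{i}d\neq 0$ or $c_{i}^{T}d\neq 0$. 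Only the global recession-cone/lineality characterization of the SOCP feasible set, together with the kernel hypothesis, rules out this possibility.

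At termination ($V(y)=\{0\}$), the rows of $B_{k+1}$ together with those of $B_{i}$ for $i\in T$ span $\mathbb{R}^{n}$, giving
\[
n\leq (m+1)+\sum_{i\in T}(m_{i}+1)\leq (m+1)+M\,|T|,
\]
where $M=\max(m,m_{1},\ldots,m_{k})+1$. Using $m+1\leq M$ yields $|T|\geq n/M-1$, and since $|T|\in\mathbb{Z}$ this gives $|T|\geq \lceil n/M\rceil -1$. Hence $\rank(x^{*})\leq 2k-|T|\leq 2k-\lceil n/M\rceil +1$. Each iteration performs a kernel computation on a matrix of size at most $(m+1+kM)\times n$ via Gaussian elimination and solves at most $k$ scalar quadratics, and $|T|$ strictly increases and is bounded by $k$, so the procedure runs in $\poly(n,k,m,m_{1},\ldots,m_{k})$ time.
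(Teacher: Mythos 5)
Your proposal is correct and follows essentially the same scheme as the paper's proof: start from an optimizer, repeatedly pick a nonzero direction in the common kernel of $B_{k+1}$ and the $B_i$ for the currently tight cone constraints, step along it until a new cone constraint becomes tight, and conclude by a row-counting argument that at least $\lceil n/(m'+1)\rceil-1$ constraints can be made tight, whence $\rank \leq 2k-s(x)\leq 2k-\lceil n/(m'+1)\rceil+1$. The two genuine differences are minor. First, you run the loop until $V(y)=\{0\}$ and count rows at termination, whereas the paper fixes the iteration count at $\lceil n/(m'+1)\rceil-1$ and checks the stacked matrix still has fewer than $n$ rows at each step; these are interchangeable and give the same bound. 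Second, you certify the existence of a finite step length by a recession-cone/lineality argument on the whole feasible set, while the paper argues locally: it picks one index $u$ with $B_u v\neq 0$ and splits into the cases $c_u^Tv=0$ (so $A_uv\neq 0$ and the norm side diverges while the affine side is constant) and $c_u^Tv\neq 0$ (flip the sign of $v$ so $c_u^Tv<0$ and the affine side tends to $-\infty$), getting a root by the intermediate value theorem. Your stated worry that this local analysis cannot suffice -- that $f_i(t)=(c_i^T(y+td)+d_i)^2-\lVert A_i(y+td)+b_i\rVert_2^2$ could stay positive for all $t$ with $B_i d\neq 0$ -- is actually unfounded: if $c_i^Td\neq 0$ the affine side vanishes at some finite $t_0$, where $f_i(t_0)\leq 0$, so a root exists on one side of $0$; and if $c_i^Td=0$ with $A_id\neq 0$ the leading coefficient of $f_i$ is negative. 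So the paper's per-constraint argument is already airtight; your global argument is valid too, just heavier than needed, and it buys a cleaner one-shot justification that does not require choosing a particular $u$ or a sign convention.
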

\begin{proof}
It suffices to show that we can find a solution $x$ such that
\[
    s(x) \geq \Bigl \lceil \frac{n}{\max(m,m_1,m_2,\dots,m_k)+1} \Bigr \rceil - 1. 
\]
Let
\[
m' = \max(m,m_1,m_2,\dots,m_k).
\]
Let $x^{(0)}$ be a solution to the SOCP. We define an algorithm iteratively with the inductive hypothesis that at the beginning of step $i$,
\[
\lVert A_j x^{(i-1)} + b_j \rVert_2 = c_j^T x^{(i-1)} + d_j,
\]
for all $j \in S_{i-1}$, where $x^{(i)}$ is the value of $x$ at the end of the $i$th iteration and $S_i$ is a set of size $i$ which is updated in each iteration. This clearly holds for $i = 1$ with $S_0 = \emptyset$. At step $i$, we do the following. Let $S_{i - 1} = \{u_1, \dots, u_{i-1}\}$ and
\[
M_i = \begin{bmatrix}
B_{k+1} \\
B_{u_1} \\
\vdots \\
B_{u_{i-1}}
\end{bmatrix}
\]
Then $M_i \in \mathbb{R}^{t_i \times n}$ for some $t_i \leq (m'+1)i$. Suppose that 
\[
i \leq \bigg \lceil \frac{n}{m'+1} \bigg \rceil - 1. 
\]
Then $t_i \leq (m'+1)i<n$. Thus, $\ker M_i \neq \{0\}$. Take $v \in \ker M_i$ such that $v \neq 0$. Since
\[
\bigcap_{i \in [k+1]} \ker B_i = \{0\},
\]
there exists $u \in [k]$, such that $B_{u} v \neq 0$. Clearly, $u \notin S_{i-1}$. Now we consider two cases. If $c_{u}^T v = 0$, then $A_{u} v \neq 0$. Thus,
\[
\lVert A_{u} (x^{(i-1)} +\lambda v) + b_{u} \rVert_2 - (c_{u}^T (x^{(i-1)} +\lambda v) + d_{u}) \longrightarrow \infty,
\]
as $\lambda \longrightarrow \infty$. Since the above expression is less than or equal to $0$ when $\lambda = 0$, there exists $\lambda^*$ such that
\[
\lVert A_{u} (x^{(i-1)} +\lambda^* v) + b_{u} \rVert_2 - (c_{u}^T (x^{(i-1)} +\lambda^* v) + d_{u}) = 0.
\]
Now if $c_{u}^T v \neq 0$, then multiplying $v$ by $-1$ if necessary, we can assume that $c_{u}^T v < 0$. Thus,
\[
\lVert A_{u} (x^{(i-1)} +\lambda v) + b_{u} \rVert_2 - (c_{u}^T (x^{(i-1)} +\lambda v) + d_{u}) \longrightarrow \infty,
\]
as $\lambda \longrightarrow \infty$. Then again we have there exists $\lambda^*$ such that
\[
\lVert A_{u} (x^{(i-1)} +\lambda^* v) + b_{u} \rVert_2 - (c_{u}^T (x^{(i-1)} +\lambda^* v) + d_{u}) = 0.
\]
Now let 
\[E = \{u:B_{u} v \neq 0\}.\]
For each $u \in E$, let 
\[
\lambda_u = \arg\min_{\lambda \in \mathbb{R}} \Biggl \{|\lambda|: \lVert A_{u} (x^{(i-1)} +\lambda v) + b_{u} \rVert_2 = (c_{u}^T (x^{(i-1)} +\lambda v) + d_{u}) \Biggr \}.
\]
Now let 
\[u_i \in \arg\min_{u \in E} |\lambda_u|.\]
Update
\[
\begin{split}
    x^{(i)} & = x^{(i-1)} + \lambda_{u_i} v \\
    S_i & = S_{i-1} \cup \{u_i\}.
\end{split}
\]
Then clearly
\[
\lVert A_j x^{(i-1)} + b_j \rVert_2 = c_j^T x^{(i-1)} + d_j,
\]
for all $j \in S_{i}$ and
\[
|S_i| = |S_{i-1}| + 1 = i.
\]
Note that $x^{(i)}$ is still a feasible solution since for all $u \in E$,
\[
\lVert A_{u} (x^{(i-1)} +\lambda^* v) + b_{u} \rVert_2 - (c_{u}^T (x^{(i-1)} +\lambda^* v) + d_{u}) \leq 0,
\]
by minimality of $|\lambda_{u_i}|$. This process stop when $i = \lceil n/(m'+1)\rceil$. Then we have
\[
\lVert A_j x^{(i-1)} + b_j \rVert_2 = c_j^T x^{(i-1)} + d_j,
\]
for all $j \in S_{\lceil n/(m'+1) \rceil - 1}$ and thus
\[
s(x^{(i-1)}) \geq \Bigl \lceil \frac{n}{m'+1} \Bigr \rceil - 1.
\]
\end{proof}

\subsubsection{Complexity of rank-constrained SOCP}

In this section, we study the complexity of rank-constrained SOCP. First, we will show that Max-Cut could be formulated as rank-constrained SOCP. Then, we will use this reduction to show that rank-constrained SOCP is NP-hard under certain circumstances. Our reduction of Max-Cut to rank-constrained SOCP is a slight modification of the SOCP relaxation results in \cite{MST2003}.

\begin{example}
First, we consider a nonconvex Quadratically Constrained
Linear Program (QCLP):
\begin{equation} \label{def:nonconvex:qclp}
\begin{split}
    \underset{x \in \mathbb{R}^n}{\textrm{minimize}} \hspace{2mm} & c^Tx \\
    \textrm{subject to}  \hspace{2mm} & x^TQ_ix+ g_i^Tx + f_i = 0,\hspace{2mm} i = 1,\dots,m,
\end{split}
\end{equation}
where $Q_i \in \mathbb{S}^n$, $g_i \in \mathbb{R}^n$, and $f_i \in \mathbb{R}$. We will first show that the above nonconvex QCLP is equivalent to a rank-constrained SOCP. Then, since Max-Cut can be written as a nonconvex QCLP, Max-Cut is also equivalent to a rank-constrained SOCP.

\pmb{QCLP to rank-constrained SOCP:}

Since
\[
x^TQ_ix = \tr(Q_i x x^T),
\]
the QCLP in \eqref{def:nonconvex:qclp} is clearly equivalent to
\[
\begin{split}
    \underset{x \in \mathbb{R}^n, X \in \mathbb{S}^n}{\textrm{minimize}} \hspace{2mm} & c^Tx \\
    \textrm{subject to}  \hspace{2mm} & \tr(Q_i X)+ g_i^Tx + f_i = 0,\hspace{2mm} i = 1,\dots,m, \\
    & X - xx^T = 0.
\end{split}
\]
The above is an LP with the additional constraint $X - xx^T = 0$. Thus, it suffices to write that constraint as a second order cone constraint. Let $C_1, \dots, C_r$ be an orthonormal basis for $\mathbb{S}^n$. Then $X - xx^T = 0$ if and only if
\[
\tr(C_i (X - xx^T)) = 0,
\]
for all $i \in [r]$. For each $i$, there exists $\lambda_i$ such that $C_i + \lambda_i I \in \mathbb{S}^n_+$. Let $\Tilde{C}_i = C_i + \lambda_i I$. Then for all $i \in [r]$, $\tr(C_i (X - xx^T)) = 0$ if
\begin{equation} \label{nonconvex:qclp:eq:1}
    \tr(\Tilde{C}_i (X - xx^T)) = 0, \hspace{2mm} \textrm{and} \hspace{2mm} \tr(I (X - xx^T)) = 0.
\end{equation}
On the other hand, if $X - xx^T = 0$, then equation \eqref{nonconvex:qclp:eq:1} certainly holds for all $i \in [r]$. Thus, $X - xx^T = 0$ if and only if equation \eqref{nonconvex:qclp:eq:1} holds for all $i \in [r]$.

Now let $A \in \mathbb{S}^n_+$. Then $A = VV^T$ for some $V \in \mathbb{R}^{n \times n}$. Note that
\begin{equation} \label{maxcut:hp:key}
    \tr(A (X - xx^T)) = \tr(AX) - x^TVV^Tx = a - u^Tu,
\end{equation}
where $a = \tr(AX), u = V^T x$. Now note that
\[ a- u^Tu = 0\]
if and only if
\[
(a+1)^2 = (a - 1)^2 + 4u^Tu.
\]
Let $w = \begin{bmatrix}
a-1 \\
2u
\end{bmatrix}$,
then the above equation is equivalent to
\[
\lVert w \rVert_2 = a+1.
\]
In other words, we get
\begin{equation} \label{eq:30}
\Bigg \lVert 
\begin{bmatrix}
\tr(AX)-1 \\
2V^T x
\end{bmatrix}
\Bigg \rVert_2
= \tr(AX) + 1,
\end{equation}
which is a second order cone constraint with equality. Note that the left hand side and right hand side of \eqref{eq:30} cannot both be $0$. Otherwise, we would have $tr(AX) = 1$ from the left hand side and $tr(AX) = -1$ from the right hand side, which is a contradiction. Thus,
\[
e(x) = 0,
\]
for all $x$ that satisfies \eqref{eq:30}, where $e(x)$ is the number of second order cone constrained that are satisfied with equality and both sides are zero. Then, we need 
\[
s(x) = k \quad \quad \quad \quad \textrm{and} \quad \quad \quad \quad e(x) = 0,
\]
where
\[
k = 1+\frac{n(n+1)}{2}
\]
is the number of second order cone constraints and $s(x)$ is the number of second order cone constrained that are satisfied with equality. Thus, the original QCLP could be written as a SOCP problem with the additional rank constraint
\[
\rank(x) \leq k.
\]

\pmb{Max-Cut to QCLP:}
It is well known that Max-Cut can be written in the form \cite{MST2003}
\[
\begin{split}
    \underset{x \in \mathbb{R}^n}{\textrm{minimize}} \hspace{2mm} & x^TQx \\
    \textrm{subject to}  \hspace{2mm} & x_i^2 - 1 = 0,\hspace{2mm} i = 1,\dots,n,
\end{split}
\]
for some $Q \in \mathbb{S}^n$. Then it is equivalent to
\[
\begin{split}
    \underset{x \in \mathbb{R}^n}{\textrm{minimize}} \hspace{2mm} & t \\
    \textrm{subject to}  \hspace{2mm} & x_i^2 - 1 = 0,\hspace{2mm} i = 1,\dots,n, \\
    & x^TQx - t = 0,
\end{split}
\]
which is then in the form of the nonconvex QCLP in \eqref{def:nonconvex:qclp}. Thus, Max-Cut is equivalent to a rank-constrained SOCP.
\end{example}

From the above example, we see that rank-constrained SOCP includes some interesting combinatorial problems, which motivates the study of rank-constrained SOCP:
\[
\begin{split}
    \underset{x\in \mathbb{R}^n}{\textrm{minimize}} \hspace{2mm} & c^Tx \\
    \textrm{subject to}  \hspace{2mm} & \Vert A_ix + b_i \rVert_2 \leq c_i^Tx + d_i, \hspace{2mm} i = 1,\dots,k; \\
                      & Fx = g,\\
                      & \rank(x) \leq r(k)
\end{split}
\]
where $r(k)$ is a function in $k$, $A_i \in \mathbb{R}^{m_i \times n}, b_i \in \mathbb{R}^{m_i}, c_i \in \mathbb{R}^n$, and $d_i \in \mathbb{R}$ for each $i = 1,\dots,k$, $F \in \mathbb{R}^{m \times n}, c \in \mathbb{R}^n$, and $g \in \mathbb{R}^{m}$. Next, we give a complexity result of this problem.
 
\begin{theorem} \label{socp:rank:hardness}
Let $A_i \in \mathbb{R}^{m_i \times n}, b_i \in \mathbb{R}^{m_i},  c_i \in \mathbb{R}^n$, and $d_i \in \mathbb{R}$ for each $i = 1,\dots,k$, $F \in \mathbb{R}^{m \times n}, c \in \mathbb{R}^n$, and $g \in \mathbb{R}^{m}$ be given. Let $s \geq 0$ be a constant. Then the following rank-constrained SOCP:
\begin{equation*}
\begin{split}
    \underset{x\in \mathbb{R}^n}{\textrm{minimize}} \hspace{2mm} & c^Tx \\
    \textrm{subject to}  \hspace{2mm} & \Vert A_ix + b_i \rVert_2 \leq c_i^Tx + d_i, \hspace{2mm} i = 1,\dots,k; \\
                      & Fx = g,\\
                      & \rank(x) \leq k+s,
\end{split}
\end{equation*}
 is NP-hard. 
\end{theorem}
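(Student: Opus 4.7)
The plan is to reduce Max-Cut to the rank-constrained SOCP. The case $s=0$ is handled directly by the Max-Cut example preceding the statement, which shows that Max-Cut is equivalent to a rank-constrained SOCP with $k_0 = 1 + n(n+1)/2$ SOC constraints and rank bound $k_0$; since Max-Cut is NP-hard, this establishes the $s=0$ case immediately.

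For general constant $s \geq 0$, I would augment the Max-Cut SOCP as follows. Introduce $s$ new scalar variables $w_1,\dots,w_s$; append the linear constraints $w_i = 0$; and append the SOC constraints $|w_i| \leq 1$, each written in standard form $\lVert E_i \tilde{x} \rVert_2 \leq 1$, where $\tilde{x} = (x, w_1,\dots,w_s)$ and $E_i$ is the row selecting the coordinate $w_i$. The augmented problem then has $k' = k_0 + s$ SOC constraints, and we impose the rank bound $\rank \leq k' + s = k_0 + 2s$, which matches the $k+s$ form in the theorem statement.

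The key verification is the rank accounting using the formula $\rank(z) = 2k - s(x) - e(x)$ derived before Theorem \ref{socp:rank:reduction}. In any feasible solution, the linear constraint $w_i = 0$ forces $|w_i| = 0 < 1$, so each appended SOC constraint is strictly satisfied and contributes exactly $2$ to the rank. The $k_0$ original Max-Cut SOC constraints satisfy $e(x)_{\text{orig}} = 0$ (the ``cannot both be zero'' observation made in the Max-Cut example), so they contribute $2k_0 - s(x)_{\text{orig}}$. Thus the total rank equals $2k_0 + 2s - s(x)_{\text{orig}}$, and the imposed bound $\rank \leq k_0 + 2s$ is equivalent to $s(x)_{\text{orig}} \geq k_0$, i.e., every original SOC constraint is tight. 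This is exactly the condition under which the Max-Cut example's construction encodes Max-Cut, so the augmented rank-constrained SOCP is equivalent to Max-Cut, and NP-hardness follows.

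The reduction is polynomial-time because $s$ is a constant and $k_0 = 1 + n(n+1)/2$ is polynomial in the graph size. The main subtlety is to ensure that each appended SOC constraint is \emph{forced} to be strict, rather than merely \emph{allowed} to be; this is precisely why the linear pinning $w_i = 0$ is essential. Without it a feasible point could make some $|w_i| \leq 1$ tight, freeing a unit of rank budget that could be spent on leaving an original Max-Cut SOC constraint nontight, which would derail the equivalence with Max-Cut.
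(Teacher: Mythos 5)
Your proof is correct and follows essentially the same route as the paper: start from the Max-Cut encoding with rank bound equal to the number of second-order cone constraints, then pad with auxiliary variables pinned by linear equalities so that each added SOC constraint is forced to be strictly satisfied and hence contributes $2$ to the rank, raising the admissible offset by one per added constraint. The only difference is cosmetic---you add all $s$ dummy constraints at once (pinning $w_i=0$ against the bound $1$), while the paper iterates the increment one constraint at a time (pinning $x_{n+1}=1$ against the bound $2$)---and your rank accounting via $\rank(z)=2k-s(x)-e(x)$ matches the paper's.
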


\begin{proof}
From the example at the beginning of this section, we see that rank-constrained SOCP with $r(k) = k$ is NP-hard since we can reduce Max-Cut to it. In other words, the following problem:
\begin{equation} \label{eq:10}
\begin{split}
    \underset{x\in \mathbb{R}^n}{\textrm{minimize}} \hspace{2mm} & c^Tx \\
    \textrm{subject to}  \hspace{2mm} & \Vert A_ix + b_i \rVert_2 \leq c_i^Tx + d_i, \hspace{2mm} i = 1,\dots,k; \\
                      & Fx = g,\\
                      & \rank(x) \leq k,
\end{split}
\end{equation}
is NP-hard.
Now, we show how to increase $r(k)$ from $k$ to $k+1$. Consider the following problem:
\begin{equation} \label{eq:19}
\begin{split}
    \underset{x\in \mathbb{R}^{n+1}}{\textrm{minimize}} \hspace{2mm} & \Tilde{c}^Tx \\
    \textrm{subject to}  \hspace{2mm} & \big \lVert \Tilde{A_i}x + \Tilde{b}_i \big \rVert_2 \leq \Tilde{c_i}^Tx + d_i, \hspace{2mm} i = 1,\dots,k; \\
                      & \Tilde{F}x = g,\\
                      & x_{n+1} = 1,\\
                      & \lVert Ex \rVert_2 \leq 2, \\
                      & \rank(x) \leq (k+1)+1,
\end{split}
\end{equation}
where $\Tilde{A}_i = \begin{bsmallmatrix}
                     A_i \\
                     0
\end{bsmallmatrix}, \Tilde{c} = \begin{bsmallmatrix}
                     c \\
                     0
\end{bsmallmatrix}, \Tilde{b}_i = \begin{bsmallmatrix}
                     b_i \\
                     0
\end{bsmallmatrix}, \Tilde{c}_i = \begin{bsmallmatrix}
                     c_i \\
                     0
\end{bsmallmatrix}, \Tilde{F} = \begin{bsmallmatrix}
                     F & 0
\end{bsmallmatrix}, E = e_{n+1}e_{n+1}^T$, and $e_{n+1} = (0,\dots,0,1)$. Note that since $x_{n+1} = 1$,
\[
\lVert Ex \rVert_2 = 1<2.
\]
Thus, this second order cone constraint will contribute $2$ to the rank. Thus, \eqref{eq:19} is equivalent to \eqref{eq:10}. Note that there are $k+1$ second order cone constraints in \eqref{eq:19}. Using this argument $s$ times finishes the proof.
\end{proof}


\subsection{Rank-Constrained QCQP}
In this section, we study rank-constrained QCQP. We first define the rank of QCQP by viewing it as a SOCP. Then, we apply the results in the previous section to do rank reduction on QCQP. Next, we show that Max-Cut can also be written as a rank-constrained QCQP. Finally, we show that rank-constrained QCQP is NP-hard in certain circumstances.

\subsubsection{QCQP rank reduction}
Consider the following QCQP:
\begin{equation} \label{def:qcqp:rank}
\begin{split}
    \underset{x \in \mathbb{R}^n}{\textrm{minimize}} \hspace{2mm} & x^TQ_0x + c_0^Tx \\
    \textrm{subject to}  \hspace{2mm} & x^TQ_ix + c_i^Tx + d_i \leq 0, \hspace{2mm} i = 1,\dots k; \\
    & Ax = b,
\end{split}
\end{equation}
where $Q_i \in \mathbb{S}^n_+$, $c_i \in \mathbb{R}^n$ for each $i = 0,1,\dots,k$, $d_i \in \mathbb{R}^n$ for each $i = 1,\dots,k$, $A \in \mathbb{R}^{m \times n}$, and $b \in \mathbb{R}^m$. First, we write QCQP \ref{def:qcqp:rank} as a SOCP \ref{hp:socp:def}. To do this, we first consider the epigraph version of \ref{def:qcqp:rank}:
\begin{equation} \label{def:qcqp:rank2}
\begin{split}
    \underset{x \in \mathbb{R}^n, t \in \mathbb{R}}{\textrm{minimize}} \hspace{2mm} & t \\
    \textrm{subject to}  \hspace{2mm} 
    & x^TQ_0x + c_0^Tx - t \leq 0 \\
    & x^TQ_ix + c_i^Tx + d_i \leq 0, \hspace{2mm} i = 1,\dots k; \\
    & Ax = b.
\end{split}
\end{equation}
Then, it suffices to convert a quadratic constraint
\begin{equation} \label{hp:qcqp:eq:1}
    x^TQ_ix + c_i^Tx + d_i \leq 0
\end{equation}
to a second order cone constraint. For each $i = 0,1,\dots,k$, let $r_i = \rank(Q_i)$. Then there exists $P_i \in \mathbb{R}^{r_i \times r_i}$ such that $Q_i = P_i^T P_i$ since $Q_i \in \mathbb{S}^n_+$. Then, equation \ref{hp:qcqp:eq:1} becomes
\begin{equation} \label{hp:qcqp:eq:2}
    \lVert P_i x \rVert_2^2 \leq -c_i^T x - d_i.
\end{equation}
Since
\[
-c_i^T x - d_i = (1/4 -c_i^T x - d_i)^2 - (1/4 + c_i^T x + d_i)^2,
\]
equation \ref{hp:qcqp:eq:2} is equivalent to
\begin{equation*}
    \lVert P_i x \rVert_2^2 + (1/4 + c_i^T x + d_i)^2 \leq (1/4 -c_i^T x - d_i)^2,
\end{equation*}
which is equivalent to
\begin{equation} \label{hp:qcqp:eq:3}
    \Bigg \lVert 
    \begin{bmatrix}
    P_i x \\
    1/4 + c_i^T x + d_i
    \end{bmatrix}
    \Bigg \rVert_2
    \leq \lVert 1/4 -c_i^T x - d_i \rVert_2,
\end{equation}
which is a second order cone constraint. Note that the left hand side and right hand side of equation \ref{hp:qcqp:eq:3} cannot both be zeroes, since $1/4 + c_i^T x + d_i$ and $1/4 - c_i^T x - d_i$ cannot both be zeroes. Thus, according to the definition of rank for SOCP, the rank in QCQP \ref{def:qcqp:rank} is defined as
\begin{equation*}
    \rank(x) \vcentcolon = 2k+1 - s(x),
\end{equation*}
where $s(x)$ is the number of quadratic constraints in \ref{def:qcqp:rank} that are satisfied with equality (i.e. $x^TQ_ix + c_i^Tx + d_i = 0$). Note that by writing QCQP as SOCP, we have $k+1$ second order cone constraints. However, $x^TQ_0x + c_0^Tx - t = 0$ for any $x$ that attains the optimal value.
Note that in QCQP,
\begin{equation*}
    \rank(x) \geq k + 1
\end{equation*}
for all $x \in \mathbb{R}^n$.

Now, we do rank reduction on QCQP. By viewing QCQP as a SOCP and applying Theorem \ref{socp:rank:reduction}, we get the following result.

\begin{theorem}
Let $Q_i \in \mathbb{S}^n_+$, $c_i \in \mathbb{R}^n$ for each $i = 0,1,\dots,k$, $d_i \in \mathbb{R}^n$ for each $i = 1,\dots,k$, $A \in \mathbb{R}^{m \times n}$, and $b \in \mathbb{R}^m$ be given.
Suppose the following QCQP:
\begin{equation} \label{qcqp:sp:def2}
\begin{split}
    \underset{x \in \mathbb{R}^n}{\textrm{minimize}} \hspace{2mm} & x^TQ_0x + c_0^Tx \\
    \textrm{subject to}  \hspace{2mm} & x^TQ_ix + c_i^Tx + d_i \leq 0, \hspace{2mm} i = 1,\dots k; \\
    & Ax = b,
\end{split}
\end{equation}
is feasible and
\[
\bigcap_{i = 0}^{k+1} \ker B_i = \{0\},
\]
where 
\[
B_i = \begin{bmatrix}
Q_{i} \\
c_{i}^T
\end{bmatrix} \quad \textrm{for each} \hspace{2mm} i = 0,1,\dots,k, \hspace{2mm} \textrm{and} \quad B_{k+1} = A.
\]
Then there exists a solution $x^*$ to \ref{qcqp:sp:def2} such that 
\begin{equation*}
    \rank(x^*) \leq 2k + 3 - \Bigl \lceil \frac{n}{\max(m-1,\rank(Q_0),\rank(Q_1),\dots,\rank(Q_k))+1} \Bigr \rceil.
\end{equation*}
Moreover, $x^*$ can be found in polynomial time.
\end{theorem}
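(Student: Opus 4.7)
The plan is to apply Theorem \ref{socp:rank:reduction} to the SOCP reformulation of the QCQP \eqref{qcqp:sp:def2} that was constructed in the paragraphs immediately preceding the theorem. Specifically, I would first put the QCQP in epigraph form by introducing $t\in\mathbb{R}$ and minimizing $t$ subject to $x^TQ_0x+c_0^Tx-t\le 0$ together with the original constraints, then for each $i=0,\dots,k$ factor $Q_i=P_i^TP_i$ with $P_i\in\mathbb{R}^{r_i\times n}$ and use the identity $-c_i^Tx-d_i=(1/4-c_i^Tx-d_i)^2-(1/4+c_i^Tx+d_i)^2$ to turn every quadratic inequality into an SOC constraint whose norm vector has $r_i+1$ entries. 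The outcome is an SOCP in variables $(x,t)\in\mathbb{R}^{n+1}$ with $k+1$ SOC constraints (each with $m_i^{\mathrm{socp}}=r_i+1$), $m$ linear equalities coming from $Ax=b$, and linear objective $\min t$.

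Before invoking Theorem \ref{socp:rank:reduction} I need to verify that its kernel hypothesis is inherited from the hypothesis $\bigcap_{i=0}^{k+1}\ker B_i=\{0\}$ in the present statement. Because the SOCP objective vector is $c^{\mathrm{socp}}=e_{n+1}$, any direction $v=(v_x,v_t)$ in the SOCP kernel intersection automatically satisfies $v_t=0$, so the question reduces to directions of the form $(v_x,0)$. For each $i\ge 1$, the SOCP block $B_i^{\mathrm{socp}}$ kills $(v_x,0)$ precisely when $Q_iv_x=0$ and $c_i^Tv_x=0$, i.e.\ when $v_x\in\ker B_i$ in the QCQP notation; an analogous check at $i=0$ reduces the objective-SOC block to $v_x\in\ker B_0$; and the SOCP linear block reduces to $v_x\in\ker A=\ker B_{k+1}$. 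Thus the SOCP hypothesis holds and Theorem \ref{socp:rank:reduction} is applicable.

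Finally I would translate the SOCP rank bound back to a QCQP rank bound using the identification established just before the theorem. At any QCQP-optimal $x^*$ with $t^*=x^{*T}Q_0x^*+c_0^Tx^*$, the epigraph SOC is automatically tight, and neither it nor any original SOC can have both sides simultaneously zero (because the two scalars $1/4\pm(\cdot)$ cannot both vanish), so $e(x^*,t^*)=0$ and $s_{\mathrm{socp}}=1+s_{\mathrm{qcqp}}$. Hence the SOCP rank at $(x^*,t^*)$ equals the QCQP rank $2k+1-s_{\mathrm{qcqp}}(x^*)$, and the rank-reduced SOCP solution produced by Theorem \ref{socp:rank:reduction} yields a QCQP solution of the desired rank, in polynomial time. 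Plugging $n_{\mathrm{socp}}=n+1$, $k_{\mathrm{socp}}=k+1$, and $m'_{\mathrm{socp}}=\max(m,r_0+1,\dots,r_k+1)=\max(m-1,r_0,\dots,r_k)+1$ into the SOCP bound produces the stated inequality. The main obstacle I anticipate is not conceptual but the off-by-one bookkeeping inside the ceiling term: one must carefully track the extra epigraph variable $t$ and the $+1$ contributed by the objective SOC being always tight so that the bound collapses to exactly the form given in the theorem statement.
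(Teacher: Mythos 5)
Your overall route is the same as the paper's: reformulate \eqref{qcqp:sp:def2} as the epigraph SOCP in $(x,t)\in\mathbb{R}^{n+1}$, check that the kernel hypothesis transfers (your verification of this, via the objective row forcing $v_t=0$, is correct), and then invoke Theorem \ref{socp:rank:reduction}. However, the final step as you state it does not give the theorem: plugging $n_{\mathrm{socp}}=n+1$, $k_{\mathrm{socp}}=k+1$, $m_{\mathrm{socp}}=m$, $m_i^{\mathrm{socp}}=r_i+1$ into the bound of Theorem \ref{socp:rank:reduction} yields $2k+3-\bigl\lceil (n+1)/\bigl(\max(m-1,r_0,\dots,r_k)+2\bigr)\bigr\rceil$, because that theorem's denominator is $m'+1$, not $m'$. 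This is \emph{not} the stated bound $2k+3-\bigl\lceil n/\bigl(\max(m-1,r_0,\dots,r_k)+1\bigr)\bigr\rceil$, and the discrepancy is not harmless: with $D=\max(m-1,r_0,\dots,r_k)+1$ one has $\lceil (n+1)/(D+1)\rceil\le\lceil n/D\rceil$ whenever $D\le n$, and the gap can be large (e.g.\ $D=2$, $n=20$ gives $7$ versus $10$), so a black-box application of the statement of Theorem \ref{socp:rank:reduction} only proves a strictly weaker inequality.

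What closes the gap (and what the paper's proof is implicitly doing with its remarks that $\ker P_i=\ker Q_i$, that one need not duplicate $c_i^T$, and that $B_{k+1}$ need not be concatenated with $c_0$) is that in the rank-reduction argument only the \emph{kernels} of the stacked blocks matter, so each block may be replaced by a smaller matrix with the same kernel before counting rows: the block of SOC constraint $i$ (literally $r_i+2$ rows: $P_i$, the scalar row, and the right-hand-side row) has the same kernel as the $(r_i+1)$-row matrix $[P_i;\,c_i^T]$ (or $[P_0\ 0;\,c_0^T\ {-1}]$ for the epigraph constraint), while the linear block is $[A\ 0;\,e_{n+1}^T]$ with $m+1$ rows. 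Re-running the counting in the \emph{proof} of Theorem \ref{socp:rank:reduction} with these effective blocks, at step $i$ the stacked matrix has at most $(m+1)+(i-1)\bigl(\max_j r_j+1\bigr)\le i\max(m,\max_j r_j+1)+1$ rows, which is less than the ambient dimension $n+1$ for all $i\le\lfloor (n-1)/\max(m,\max_j r_j+1)\rfloor\ge\lceil n/\max(m,\max_j r_j+1)\rceil-1$; this produces enough tight constraints to give exactly the stated bound (using also that the epigraph constraint stays tight since $t$ is frozen by the objective row). So you need either to redo this count explicitly or to first prove a variant of Theorem \ref{socp:rank:reduction} in which $m'$ is the maximum of the effective (kernel-level) block sizes; as written, your "plug in the parameters" step fails.
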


\begin{proof}
First, we write the QCQP as a SOCP. Then we have $k+1$ second order cone constraints. Note that for $B_{k+1}$, we don't need to concatenate $A$ with $c_0$ as we did in the SOCP case since the objective $x^TQ_0x + c_0^Tx$ becomes a second order cone constraint. In addition, if $Q_i = P_i^T P_i$, then $\ker(Q_i) = \ker(P_i)$. In addition, for $i = 1,\dots,k$ instead of defining $B_i$ as a concatenation of $Q_i, c_i^T$ and $c_i^T$, we just need to define it as a concatenation of $Q_i$ and $c_i^T$ since they have the same kernel. Applying Theorem \ref{socp:rank:reduction} to the resulting SOCP finishes the proof.
\end{proof}

\subsubsection{Complexity of rank-constrained QCQP}
In this section, we study the complexity of rank-constrained QCQP. First, note that Max-Cut can be written as a rank-constrained QCQP. This follows from the fact that equation \ref{maxcut:hp:key} can be seen as a quadratic constraint
\begin{equation*}
    x^TAx - \tr(AX) \leq 0,
\end{equation*}
that holds with equality. Thus, Max-Cut can be written as a rank-constrained QCQP where the constraint on rank is $\rank(x) \leq k+1$. With similar techniques as we use in Theorem \ref{socp:rank:hardness}, we obtain the following result.

\begin{theorem}
Let $Q_i \in \mathbb{S}^n_+$, $c_i \in \mathbb{R}^n$ for each $i = 0,1,\dots,k$, $d_i \in \mathbb{R}^n$ for each $i = 1,\dots,k$, $A \in \mathbb{R}^{m \times n}$, and $b \in \mathbb{R}^m$ be given. Let $s \geq 1$ be a constant. Then the following rank-constrained QCQP:
\begin{equation*}
\begin{split}
    \underset{x \in \mathbb{R}^n}{\textrm{minimize}} \hspace{2mm} 
    & x^TQ_0x + c_0^Tx \\
    \textrm{subject to}  \hspace{2mm} 
    & x^TQ_ix + c_i^Tx + d_i \leq 0, \hspace{2mm} i = 1,\dots k; \\
    & Ax = b,\\
    & \rank(x) \leq k+s,
\end{split}
\end{equation*}
 is NP-hard. 
\end{theorem}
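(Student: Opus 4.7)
The plan is to reduce from Max-Cut. The QCQP encoding of Max-Cut recalled in the paragraph preceding the theorem already shows that the rank-constrained QCQP is NP-hard at rank bound $k+1$, handling the case $s=1$ directly. For general $s \ge 1$ I will inflate the gap by padding the problem with trivially loose quadratic constraints, in direct analogy with the construction used in the proof of Theorem \ref{socp:rank:hardness}.

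Starting from a Max-Cut instance expressed as a QCQP in $n$ variables with $k$ quadratic constraints, all tight at every feasible point, I would introduce a new coordinate $x_{n+1}$, append the linear equation $x_{n+1} = 1$ to the system $Ax = b$, and adjoin $s - 1$ dummy quadratic constraints of the form $x_{n+1}^2 - 2 \le 0$. Each has $Q = e_{n+1} e_{n+1}^T \in \mathbb{S}^{n+1}_+$, $c = 0$, $d = -2$, so it is of the required PSD form, and at any feasible point evaluates to $-1 < 0$, hence is strictly loose.

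The heart of the argument is the rank bookkeeping. The padded problem has $k' \defeq k + s - 1$ quadratic inequalities, of which the $s - 1$ dummy ones are never tight, so $s(x) \le k$ at every feasible point. By the rank formula $\rank(x) = 2k' + 1 - s(x)$ derived in the preceding subsection, the requirement $\rank(x) \le k' + s$ is equivalent to $s(x) \ge k' + 1 - s = k$, which forces $s(x) = k$ and hence forces every one of the $k$ original Max-Cut constraints to be tight. Feasible solutions of the padded rank-constrained QCQP are therefore in bijection with Max-Cut solutions, and any polynomial-time algorithm for the padded problem would solve Max-Cut in polynomial time, giving the claimed NP-hardness.

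The one point requiring care is that the dummy constraint, when translated through the SOCP embedding of \eqref{hp:qcqp:eq:3}, genuinely contributes $2$ to the rank rather than falling into the degenerate case in which both sides of the associated second order cone constraint vanish (which would leave the rank formula $2k'+1-s(x)$ off by $e(x) > 0$). For $c = 0$, $d = -2$ the right-hand side of \eqref{hp:qcqp:eq:3} equals $|1/4 - c^T x - d| = 9/4$, which never vanishes, so that degeneracy cannot arise on the dummy constraints and the rank formula applies as stated.
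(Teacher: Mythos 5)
Your proposal is correct and follows essentially the same route as the paper: reduce from Max-Cut at rank bound $k+1$, then pad with strictly slack dummy quadratic constraints $x_{n+1}^2 - 2 \le 0$ on a coordinate pinned to $1$, each contributing $2$ to the rank so that the bound $k'+s$ forces all original constraints tight. The only cosmetic difference is that you add all $s-1$ dummy constraints at once with a single extra coordinate, while the paper iterates the one-constraint augmentation $s-1$ times; your explicit check that the degenerate case $e(x)>0$ cannot occur is a nice touch already covered by the paper's observation that $1/4 + c_i^Tx + d_i$ and $1/4 - c_i^Tx - d_i$ cannot both vanish.
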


\begin{proof}
Since we can write Max-Cut as a rank-constrained QCQP with the rank constraint $\rank(x) \leq k+1$, the following problem
\begin{equation} \label{pf:qcqp:rank:1}
\begin{split}
    \underset{x \in \mathbb{R}^n}{\textrm{minimize}} \hspace{2mm} 
    & x^TQ_0x + c_0^Tx \\
    \textrm{subject to}  \hspace{2mm} 
    & x^TQ_ix + c_i^Tx + d_i \leq 0, \hspace{2mm} i = 1,\dots k; \\
    & Ax = b,\\
    & \rank(x) \leq k+1,
\end{split}
\end{equation}
is NP-hard. Now, we show how to increase the rank constraint from $k+1$ to $k+2$. For each $i = 0,\dots,k$, $Q_i = P_i^T P_i$ for some $P_i \in \mathbb{R}^{n \times n}$. Consider the following problem:
\begin{equation} \label{pf:qcqp:rank:2}
\begin{split}
    \underset{x \in \mathbb{R}^{n+1}}{\textrm{minimize}} \hspace{2mm} 
    & x^T\Tilde{P_0}^T\Tilde{P_0}x + \Tilde{c_0}^Tx \\
    \textrm{subject to}  \hspace{2mm} 
    & x^T\Tilde{P_i}^T\Tilde{P_i}x + \Tilde{c_i}^Tx + d_i \leq 0, \hspace{2mm} i = 1,\dots k; \\
    & \Tilde{A}x = b,\\
    & x_{k+1} = 1, \\
    & x^T E x \leq 2, \\
    & \rank(x) \leq (k+1)+2,
\end{split}
\end{equation}
where $\Tilde{A} = \begin{bsmallmatrix}
                     A &
                     0
\end{bsmallmatrix}, \Tilde{P}_i = \begin{bsmallmatrix}
                     P_i &
                     0
\end{bsmallmatrix}, \Tilde{c}_i = \begin{bsmallmatrix}
                     c_i \\
                     0
\end{bsmallmatrix}, E = e_{n+1}e_{n+1}^T$, and $e_{n+1} = (0,\dots,0,1)$. Note that since $x_{n+1} = 1$,
\[
x^T E x = 1<2.
\]
Thus, this quadratic constraint will contribute $2$ to the rank. Thus, \eqref{pf:qcqp:rank:2} is equivalent to \eqref{pf:qcqp:rank:1}. Note that there are $k+1$ quadratic constraints in \eqref{pf:qcqp:rank:2}. Using this argument $s-1$ times finishes the proof.
\end{proof}

\section{conclusion}
In this paper, we study rank-constrained and sparsity-constrained HP. For rank-constrained HP, we design algorithms for rank reduction and study the complexity of rank-constrained HP. We showed that both rank-constrained QCQP and rank-constrained SOCP are NP-hard. In addition, we show that there is a phase transition in the complexity of rank-constrained SDP with $m$ linear constraints when the rank constraint $r(m)$ passes through $\sqrt{2m}$.

For sparsity-constrained HP, we extend results on LP sparsification to QCQP and SOCP and show that our results give tight upper bounds on minimal cardinality solutions to QCQP and SOCP.

\bibliographystyle{abbrv}

\bibliography{rank}

\end{document}